\numberwithin{equation}{section} \theoremstyle{plain}
\newtheorem{thm}{Theorem}[section]
\newtheorem{prop}[thm]{Proposition}
\newtheorem{cor}[thm]{Corollary}
\newtheorem{exmp}[thm]{Example}
\newtheorem{prob}[thm]{Problem}
\newtheorem{rem}[thm]{Remark}
\newtheorem{ack}{Acknowledgements}   
\newcommand{\fol}{\mathcal{F}}
\newcommand{\sphere}{\mathbb{S}}
\DeclareMathOperator{\Diff}{Diff}
\DeclareMathOperator{\Isom}{Isom}
\title[Foliations, diffeomorphisms and exotic structures]{Isoparametric foliations, diffeomorphism groups and exotic smooth structures}
\author[J.Q. Ge]{Jianquan Ge}
\address{School of Mathematical Sciences, Laboratory of Mathematics and Complex Systems, Beijing Normal
University, Beijing 100875, P.R. CHINA.}
\email{jqge@bnu.edu.cn}
\subjclass[2010]{53C12, 57R55, 57R60, 57R30.}
\date{}
\keywords{singular Riemannian foliation, isoparametric hypersurface, homotopy sphere, exotic smooth structure, diffeomorphism group.}
\thanks{The project was partially supported by the NSFC (No. 11331002), the Fundamental Research
Funds for the Central Universities, and a research fellowship from the Alexander von Humboldt Foundation. }
\begin{document}
\maketitle

%%%%%%%%%%%%%%%%%%%%%
\begin{abstract}
 In this paper, we are concerned with interactions between isoparametric theory and differential topology. Two foliations are called equivalent if there exists a diffeomorphism between the foliated manifolds mapping leaves to leaves. Using differential topology, we obtain several results towards the classification problem of isoparametric foliations up to equivalence. In particular, we show that each homotopy $n$-sphere has the ``same" isoparametric foliations as the standard sphere $\sphere^n$ has except for $n=4$, reducing the classification problem on homotopy spheres to that on the standard sphere. Moreover, we prove the uniqueness up to equivalence of isoparametric foliations with two points as the focal submanifolds on each sphere $\sphere^n$ except for $n=5$. Besides, we show that the uniqueness holds on $\sphere^5$ if and only if $\pi_0(\Diff(\sphere^4))\simeq\mathbb{Z}_2$, i.e., pseudo-isotopy implies isotopy for diffeomorphisms on $\sphere^4$. At last, some ideas behind the proofs enable us to discover new exotic smooth structures on certain manifolds.
\end{abstract}
%%%%%%%%%%%%%%%%%%%%%%%

%%%%%%%%%%%%%%%%%%%%%%
\section{Introduction}\label{introduction}
A \emph{transnormal system} $\fol$ on a complete Riemannian manifold $M$ is a decomposition of $M$ into complete, injectively
immersed connected submanifolds, called \emph{leaves}, such that every geodesic emanating perpendicularly to one leaf remains perpendicular to all
leaves. A \emph{singular Riemannian foliation} is a transnormal system $\mathcal{F}$ which is also a \emph{singular foliation}, i.e., such that there are smooth vector fields $X_i$ on $M$ that span the tangent space $T_p L_p$ to the leaf $L_p$ through each point $p\in M$. A leaf of maximal dimension is called a \emph{regular} leaf, and its codimension is defined to be the \emph{codimension} of $\mathcal{F}$. Leaves of lower dimensions are called \emph{singular} leaves. By a \emph{foliated diffeomorphism} between two foliated manifolds we mean a diffeomorphism maps leaves to leaves, and such foliations are called \emph{equivalent}. By $M\cong M'$, $(M,\fol)\cong(M',\fol')$, we mean the manifolds are diffeomorphic, foliated diffeomorphic, respectively.

A singular Riemannian foliation $(M,\fol)$ of codimension 1 is called an \emph{isoparametric} foliation if the regular leaves have constant mean curvature. The regular leaves of an isoparametric foliation are called \emph{isoparametric hypersurfaces}, and the singular leaves are called \emph{focal submanifolds}.
The study of isoparametric foliations on the unit sphere $\sphere^n(1)$ originated in 1930's by E. Cartan and it developed into a very beautiful and valuable theory during the past decades. So far the classification in this case has been almost completed by a lot of contributions (for recent progress and applications see for example in \cite{CCJ07,Ch13,Mi13,TXY13,TY13} and a survey in \cite{Th10}).

Recall that a \emph{homotopy $n$-sphere} $\Sigma^n$ is a closed smooth manifold which has the homotopy type of $\sphere^n$. It is well-known (cf. \cite{JW08}) that $\Sigma^n$ is always homeomorphic to $\sphere^n$, but not always diffeomorphic in general, in which case $\Sigma^n$ is called an \emph{exotic sphere}. For $n\leq6$ and $n\neq4$, there are no exotic $n$-spheres. However, there are finitely many exotic $n$-spheres in infinitely many dimensions $n\geq7$. Recently, a remarkable result by Lytchak and Wilking \cite{LW13} shows that regular (i.e., all leaves are regular) Riemannian foliations are rather rare on homotopy spheres and in fact they occur only when the dimension of the
leaves is $1$, $3$ or $7$. In contrast, there are many singular Riemannian foliations of general codimension on unit spheres (cf. \cite{Ra14}).

As for codimension one case, Tang and the author \cite{GT12} started to study isoparametric foliations on exotic spheres and showed there are no isoparametric foliations on any exotic 4-spheres (if exist). There were also some existence examples presented on Milnor exotic 7-spheres and an example of codimension 1 singular Riemannian foliation with two points as the singular leaves on the Gromoll-Meyer 7-sphere. According to these we proposed the Problem 4.4 asking whether exotic $n$-spheres ($n\neq4$) always admit isoparametric foliations with the same focal submanifolds as those occurring on $\sphere^n$.

In \cite{QT13} Qian and Tang gave a fundamental construction of isoparametric foliations on closed manifolds that admit a decomposition into two linear disk\footnote{Throughout this paper, disks are assumed Euclidean closed disks and denoted by $D^n$ the $n$-dimensional disk; disk bundles are assumed linear, that is, closed disk bundles of vector bundles.} bundles of rank greater than 1 over closed submanifolds. On such manifolds they first constructed a Riemannian metric so that the canonical codimension 1 singular foliation (regular leaves correspond to concentric tubes around the zero sections) becomes a singular Riemannian foliation. Then they constructed a new (bundle-like) Riemannian metric so that the foliation becomes isoparametric. In particular, more examples of isoparametric foliations on exotic spheres analogues to those on standard spheres were obtained. We remark that, conversely, a codimension 1 singular Riemannian foliation on a closed simply connected manifold gives such a decomposition on the manifold (cf. \cite{Mol88}). Therefore, on closed simply connected manifolds isoparametric foliations require no more on the topology than codimension 1 singular Riemannian foliations.

Based on the Qian-Tang's fundamental construction, we can focus on the study of isoparametric foliations in the category of differential topology, neglecting the explicit Riemannian (bundle-like) metrics and geometries. In particular, classifications of isoparametric foliations on general manifolds (e.g., homotopy spheres) can be dealt with respect to foliated diffeomorphisms (equivalence classes) instead of isometries (congruence classes) as usually on unit spheres.

In this paper, we answer affirmatively the Problem 4.4 in \cite{GT12} mentioned above. In fact, we obtain
\begin{thm}\label{main thm}
 Each homotopy $n$-sphere ($n\neq4$) has the ``same" isoparametric foliations as the standard sphere $\sphere^n$ has, i.e., there is a one-to-one correspondence between the set of equivalence classes of isoparametric foliations on any homotopy $n$-sphere and that on the standard sphere $\sphere^n$. Moreover, each foliation decomposes the homotopy $n$-sphere into the same two disk bundles as those decomposed by the corresponding foliation on the standard sphere $\sphere^n$. In particular, They have the same (diffeomorphic) isoparametric hypersurfaces and focal submanifolds.
\end{thm}

As a consequence, exotic $n$-spheres ($n\neq4$) admit the same disk bundle decompositions as the standard sphere $\sphere^n$ does, in analogy with the well-known result of S. Smale that each exotic $n$-sphere ($n\neq4$) is a twisted sphere (a homotopy sphere obtained by gluing two disks along the boundaries).

By Theorem \ref{main thm}, to study isoparametric foliations on homotopy $n$-spheres it suffices to study on $\sphere^n$.
However, the classification of equivalence classes of isoparametric foliations on $\mathbb{S}^n$ ($n>4$) is far from completed. Notice that the known classification results on $\sphere^n$ are restricted to the round metric and congruence classes. Different metrics can admit different isoparametric foliations (see Examples \ref{examples}), while different congruence classes might belong to the same equivalence class.

Section \ref{disk bundle section} is mainly devoted to discussing the classification problem of isoparametric foliations up to equivalence on a fixed manifold. Via introducing some subgroups of diffeomorphism groups, we first develop some criterions to distinguish two foliations up to equivalence in Subsection \ref{subsect-criterion}. According to these criterions, in Subsection \ref{subsect-classification} we propose three main steps towards the classification. About the steps $(2)$ and $(3)$, we obtain some necessary conditions for a manifold admitting non-equivalent foliations with the same disk bundle decomposition, and also give some sufficient conditions to construct such examples in Theorem \ref{negative ex} and Corollary \ref{E+=Dn}, which can be applied quite naturally to the Eells-Kuiper quaternionic projective planes and their $SC^p$ Riemannian structures (cf. Remark \ref{Eells-Kuiper}). As an application, in Theorem \ref{cor-twopoints isop} we obtain the following rigidity result for foliations on spheres, answering partially the classification problem on $\sphere^n$:
\begin{itemize}
\item[$\bullet$] Every sphere $\sphere^n$ $(n\neq5)$ admits exactly one equivalence class of isoparametric foliations with two points as the focal submanifolds.
\item[$\bullet$] $\pi_0(\Diff(\sphere^{4}))\neq\mathbb{Z}_2$ if and only if $\sphere^5$ admits non-equivalent isoparametric foliations with two points as the focal submanifolds.
    \end{itemize}

Recall that two diffeomorphisms (resp. embeddings) $f_i: M\rightarrow N$ $(i=0,1)$ are called \emph{pseudo-isotopic} (or \emph{concordant}, \emph{quasi-isotopic}, \emph{quasi-diffeotopic}), if they extend to a diffeomorphism (resp. embedding) $F: M\times [0,1]\rightarrow N\times [0,1]$, and are called \emph{isotopic} (or \emph{diffeotopic}) if in addition $F$ is level-preserving (cf. \cite{Wa63}). It worths mentioning that through several remarkable contributions by Kervaire and Milnor \cite{KM63}, Cerf \cite{Ce70}, Smale \cite{Sm59} and Hatcher \cite{Ha83}, etc., $\pi_0(\Diff^+(\sphere^{n}))$ $(n\neq4)$ have been understood well (calculated explicitly in low dimensions) and in particular, any two pseudo-isotopic diffeomorphisms of $\sphere^{n}$ $(n\neq4)$ must be isotopic to each other. Nevertheless, it is still unknown whether $\pi_0(\Diff(\sphere^{4}))=\mathbb{Z}_2$, or equivalently, whether pseudo-isotopy implies isotopy for diffeomorphisms on $\sphere^4$ as in other dimensions (see Remarks \ref{rem-diffSn}, \ref{rem-pseudoisotop}). Hence, by the second assertion of Theorem \ref{cor-twopoints isop} above, this question is equivalent to asking whether $\sphere^5$ also admits unique equivalence class of isoparametric foliations with two points as the focal submanifolds as in other dimensions.

At the last of Section \ref{disk bundle section}, we present new examples of isoparametric foliations on $\sphere^n$ that are non-equivalent to the classical isoparametric foliations on the unit sphere $\sphere^n(1)$ (see Example \ref{examples}), illustrating the step (1) in the classification problem.

In Section \ref{sect-isop-sphere} we prove Theorem \ref{main thm} by using the h-cobordism theorem, a cutting-gluing surgery and the criterions developed in Section \ref{disk bundle section}.
Inspired by this proof, we apply the cutting-gluing surgery to the study of inertia groups and exotic smooth structures in Section \ref{sect-inertia}. Firstly we observe a new relation (\ref{I1<I0}), in Theorem \ref{thm-I1<I0}, between the two inertia groups $I_1(M)$ and $I_0(N)$ when $M$ is a hypersurface of $N$. Combining this with the original rigid relation (\ref{I1=I0}) proven by Levine \cite{Le70}, we see that $M\times\sphere^1$ has the smallest inertia group $I_0$ among all manifolds containing $M$ as a hypersurface. As an application, we obtain
\begin{thm}\label{exoticstr-thm}
Let $M$ be a closed hypersurface embedded in $\sphere^n$. Then for any $k\geq0$ and any product $P^k:=\sphere^{k_1}\times\cdots\times\sphere^{k_l}$ of standard spheres of total dimension $k=\sum_{i=1}^l k_i$ ($k_i\geq1$) ($P^k$ is a point when $k=0$), there exist at least $|\Theta_{n+k}|$ distinct oriented smooth structures on $M^{n-1}\times P^k\times\sphere^1$, where $|\Theta_n|$ is the order of the finite abelian group $\Theta_n$ of h-cobordism classes of oriented homotopy $n$-spheres.
\end{thm}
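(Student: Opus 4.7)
The plan is to reduce Theorem~\ref{exoticstr-thm} to the vanishing of the inertia group $I_0(M\times P^k\times\sphere^1)$ and then to realize $M\times P^k$ as a closed hypersurface of the standard sphere $\sphere^{n+k}$, so that Theorem~\ref{thm-I1<I0} together with Levine's relation (\ref{I1=I0}) immediately forces the vanishing. The preliminary reduction is standard: the connected sums $(M\times P^k\times\sphere^1)\#\Sigma$ for $\Sigma\in\Theta_{n+k}$ give pairwise non-diffeomorphic oriented smooth structures on $M\times P^k\times\sphere^1$ exactly when $I_0(M\times P^k\times\sphere^1)=0$, so it suffices to prove this triviality.

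To realize $M\times P^k$ as a hypersurface of $\sphere^{n+k}$, I would proceed inductively on $l$, the number of sphere factors in $P^k$. Begin with $M\subset\sphere^n$, which has trivial normal line bundle (any closed orientable hypersurface of an orientable manifold does). Include $\sphere^n$ standardly as an equator of $\sphere^{n+k_1}$; the normal bundle of $M$ in $\sphere^{n+k_1}$ splits as the trivial line bundle of $M\subset\sphere^n$ plus the trivial rank-$k_1$ normal bundle of $\sphere^n\subset\sphere^{n+k_1}$, and is therefore trivial of rank $k_1+1$. A closed tubular neighborhood of $M$ in $\sphere^{n+k_1}$ is then diffeomorphic to $M\times D^{k_1+1}$, whose boundary $M\times\sphere^{k_1}$ is a closed hypersurface of $\sphere^{n+k_1}$. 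At the $(j+1)$-th step, include $\sphere^{n+k_1+\cdots+k_j}$ standardly into $\sphere^{n+k_1+\cdots+k_{j+1}}$ and repeat: the normal bundle of the previously built hypersurface $M\times\sphere^{k_1}\times\cdots\times\sphere^{k_j}$ in the larger sphere is again a direct sum of two trivial pieces, so its closed tubular neighborhood is a product with $D^{k_{j+1}+1}$ and its boundary is $M\times\sphere^{k_1}\times\cdots\times\sphere^{k_{j+1}}$. After $l$ iterations, $M\times P^k$ sits as a closed hypersurface of $\sphere^{n+k}$.

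Finally, applying Theorem~\ref{thm-I1<I0} together with Levine's (\ref{I1=I0}) to the hypersurface $M\times P^k\subset\sphere^{n+k}$ yields
\[
I_0\bigl((M\times P^k)\times\sphere^1\bigr)\subseteq I_0(\sphere^{n+k})=0,
\]
which is exactly the required vanishing. The step I expect to require the most care is the iterated normal-bundle bookkeeping: at each stage one needs the normal line bundle of the previous hypersurface in its ambient sphere to be trivial (by orientability) and the normal bundle of one sphere inside the next to be trivial (by the standard equatorial embedding), so that the iterated tubular neighborhoods produce genuine products and not twisted sphere bundles. Both facts are classical, so the proposed argument is essentially a clean application of the new hypersurface-to-inertia relation from Theorem~\ref{thm-I1<I0}.
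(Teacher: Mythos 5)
Your proof is correct, but it takes a genuinely different route from the paper at the key step of choosing the ambient manifold. The paper simply embeds $M^{n-1}\times P^k$ as a hypersurface of $\sphere^n\times P^k$ and then applies Theorem \ref{thm-I1<I0} together with Schultz's theorem \cite{Sc71} that $I_0(\sphere^n\times P^k)=0$; the nontrivial vanishing of the inertia group of a product of spheres is thus an essential external input. You instead build, by iterating the tube-boundary construction (trivial normal line bundle of an orientable hypersurface in an orientable manifold, plus trivial normal bundle of the standard subsphere $\sphere^m\subset\sphere^{m+k_j}$), an embedding of $M\times P^k$ as a closed hypersurface of $\sphere^{n+k}$ itself, so that Theorem \ref{thm-I1<I0} and Levine's relation (\ref{I1=I0}) give $I_0(M\times P^k\times\sphere^1)\subseteq I_0(\sphere^{n+k})=0$, where the right-hand vanishing is immediate from the definition of $\Theta_{n+k}$. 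What the paper's route buys is a one-line embedding at the cost of quoting Schultz; what your route buys is a self-contained argument that needs only classical normal-bundle facts and the trivial inertia group of the standard sphere (and, as a by-product, it reproves the special case $I_0(\sphere^{n-1}\times P^k\times\sphere^1)=0$ of Schultz's result). Two small points to make explicit if you write this up: first, record (as the paper does, citing \cite{MS74}) that a closed hypersurface of a sphere is automatically orientable, so your parenthetical ``orientable hypersurface'' hypothesis is indeed satisfied at every stage of the iteration; second, the reduction in your first paragraph is exactly the ``In particular'' clause of Theorem \ref{thm-I1<I0} with $|I_0(N)|=1$, so you may simply invoke it rather than reargue that distinct elements of $\Theta_{n+k}$ give distinct structures.
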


It also follows that the group $\Gamma(M^{n-1}\times P^k)$ of concordance (pseudo-isotopy) classes of orientation-preserving diffeomorphisms of $M^{n-1}\times P^k$ (and hence the mapping class group $\pi_0(\Diff^+(M^{n-1}\times P^k))$) has at least $|\Theta_{n+k}|$ elements.

\section{Disk bundle decomposition by singular Riemannian foliation}\label{disk bundle section}
A codimension 1 singular Riemannian foliation $\fol$ on a closed simply connected manifold $N$ has exactly two closed singular leaves $M_{\pm}$ and decomposes $N$ into two unit\footnote{In general, the radius can be a positive constant which can be normalized by a homothetic transformation of the Riemannian metric on $N$.} disk bundles $E_{\pm}$ of the normal vector bundles $\xi_{\pm}$ over $M_{\pm}$ of rank $m_{\pm}>1$ (cf. \cite{Mol88}). The decomposition can be described by the following commutative diagram
\begin{equation}\label{gluing str}
\xymatrix{
 &E:=E_{+}\sqcup E_{-}\ar[dl]_{p}\ar[dr]^{\pi}&\\
 E_{\varphi}:=E_{+}\cup_{\varphi}E_{-}\ar[rr]_-{\widetilde{\pi}}^-{\cong}&&N
 =N_{+}\cup N_{-}.
}
 \end{equation}
  Here $E$ is the disjoint union of $E_{\pm}$, $\pi_{\pm}:=\pi|_{E_{\pm}}: E_{\pm}\xlongrightarrow{\cong}N_{\pm}$ are closed tubular neighborhoods of the singular leaves $\iota_{\pm}:M_{\pm}\rightarrow N$ with $s_{\pm}:=\pi_{\pm}^{-1}\circ\iota_{\pm}:M_{\pm}\rightarrow E_{\pm}$ the zero sections of $E_{\pm}$, $\varphi=\pi_{-}^{-1}\circ\pi_{+}|_{\partial E_{+}}:\partial E_{+}\rightarrow\partial E_{-}$ is the gluing diffeomorphism for $E_{\varphi}$, $\widetilde{\pi}$ is the diffeomorphism from $E_{\varphi}$ to $N$ whose composition with the natural projection $p$ satisfies $\widetilde{\pi}\circ p=\pi$. The regular leaves of $\fol$ are the images of the concentric tubes around the zero sections of $E_{\pm}$ (of constant radii under the induced Euclidean metrics of $E_{\pm}$) under the maps $\pi_{\pm}$, and the singular leaves are the images of the zero sections. The preimage of $\fol$ under $\widetilde{\pi}$ defines a codimension 1 singular Riemannian foliation $\fol_{\varphi}$ on $E_{\varphi}$ with the induced metric by $\widetilde{\pi}$. The leaves of $\fol_{\varphi}$ are just the concentric tubes (including the tubes of radius $0$, the zero sections) in $E_{\pm}$. Therefore, the equivalence class of $(N,\fol)$ can be represented by $(E_{\varphi},\fol_{\varphi})$.

  Conversely, given unit disk bundles $E_{\pm}$ over complete manifolds $M_{\pm}$ and a diffeomorphism $\varphi:\partial E_{+}\rightarrow\partial E_{-}$, the foliation $\fol_{\varphi}$ consisting of concentric tubes on $E_{\varphi}=E_{+}\cup_{\varphi}E_{-}$ would be a singular Riemannian foliation, provided with a Riemannian metric by a suitable choice of a one-parameter family of metrics on $\partial E_{+}$ in a collar of $\partial E_{+}$ in $E_+$, connecting $g_{+}|_{\partial E_{+}}$ and $\varphi^{*}(g_{-}|_{\partial E_{-}})$, where $g_{\pm}$ are metrics on $E_{\pm}$ compatible with the Euclidean metrics (cf. \cite{QT13}). Moreover, if $E_{\pm}$ are of rank greater than 1 and $M_{\pm}$ are closed, $(E_{\varphi},\fol_{\varphi})$ can become isoparametric by a more careful choice of the one-parameter family of metrics on $\partial E_{+}$ as shown in \cite{QT13}.

  It follows that to study classification of equivalence classes of codimension 1 singular Riemannian (isoparametric) foliations one needs only to study the foliations in the form $(E_{\varphi},\fol_{\varphi})$ determined by pairs of unit disk bundles $E_{\pm}\subset\xi_{\pm}$ with diffeomorphic boundaries and gluing diffeomorphisms $\varphi:\partial E_+\rightarrow\partial E_-$. Moreover, it is independent of the choices of the vector bundles $\xi_{\pm}$ in their bundle-equivalence (i.e., isomorphism) classes and of the Euclidean metrics. In fact, for any Euclidean bundles $\xi_{\pm}'$ isomorphic to $\xi_{\pm}$, there are vector bundle isomorphisms $F_{\pm}:\xi_{\pm}\rightarrow \xi_{\pm}'$ which are isometries with respect to the Euclidean metrics (cf. \cite{MS74}) and hence map concentric tubes of $E_{\pm}$ to concentric tubes of $E_{\pm}'$. Set $\psi=F_-\circ\varphi\circ F_+^{-1}|_{\partial E_+'}:\partial E_+'\rightarrow \partial E_-'$. Then the map \[F:E_{\varphi}=E_+\cup_\varphi E_-\rightarrow E_+'\cup_{\psi}E_-'=E_{\psi}'\]
  defined by $F|_{E_{\pm}}=F_{\pm}$ is a foliated diffeomorphism between $(E_{\varphi},\fol_{\varphi})$ and $(E_{\psi}',\fol_{\psi}')$.
  %%%%%%%%%%%%%%%%%%%%%%%%%%%%%%%%%%%%%%%%%%%%%%%%%%%%%%%%%%%%%%%%%%%
  \subsection{Equivalence criterions for codimension 1 singular Riemannian foliations.}\label{subsect-criterion}
  Motivated by the discussion above, we observe the following criterion to distinguish two foliations up to equivalence.
 \begin{prop}\label{equivalence criterion}
For $\varphi_0:\partial E_{+}\rightarrow\partial E_{-}$, $\varphi_1:\partial \widetilde{E}_{+}\rightarrow\partial \widetilde{E}_{-}$,  $(E_{\varphi_0},\fol_{\varphi_0})$ is foliated diffeomorphic to $(\widetilde{E}_{\varphi_1},\fol_{\varphi_1})$ if and only if there are diffeomorphisms $F_{\pm}:E_{\pm}\rightarrow \widetilde{E}_{\pm}$ mapping concentric tubes to concentric tubes such that $\varphi_1=F_{-}\circ\varphi_0\circ F_{+}^{-1}|_{\partial \widetilde{E}_+}$.
 \end{prop}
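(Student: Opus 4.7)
The plan is to prove both implications directly from the gluing construction encoded in diagram (\ref{gluing str}). The underlying principle is that a foliated diffeomorphism must preserve the stratification by singular and regular leaves, and hence must respect the decomposition of the ambient manifold into its two disk-bundle halves.

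For the sufficiency direction, given $F_\pm \in \Diff(E_\pm)$ mapping concentric tubes to concentric tubes with $\varphi_1 = F_{-}\circ\varphi_0\circ F_{+}^{-1}|_{\partial E_+}$, I would define $F:E_{\varphi_0}\to E_{\varphi_1}$ by setting $F=F_\pm$ on each piece $E_\pm$. The compatibility identity $\varphi_1\circ F_+ = F_-\circ\varphi_0$ on $\partial E_+$ is precisely what is needed for this piecewise map to descend to the quotient and land consistently in $E_{\varphi_1}$; smoothness across the gluing hypersurface follows from $F_\pm$ being diffeomorphisms of the closed disk bundles restricting to compatible diffeomorphisms on collars. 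Since $F_\pm$ preserve the concentric-tube structure by hypothesis, $F$ maps leaves of $\fol_{\varphi_0}$ to leaves of $\fol_{\varphi_1}$.

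For the necessity direction, suppose $F:(E_{\varphi_0},\fol_{\varphi_0})\to (E_{\varphi_1},\fol_{\varphi_1})$ is a foliated diffeomorphism. The two singular leaves of $\fol_{\varphi_i}$ are exactly the zero sections of $E_\pm$, so $F$ permutes them; after possibly relabeling we may assume $F$ sends the $E_+$-zero section to the $E_+$-zero section and similarly for $E_-$. Because each half $E_\pm \subset E_{\varphi_i}$ is the union of those leaves at ``radius'' at most one from the corresponding zero section (i.e.\ the concentric tubes in $E_\pm$), and because $F$ sends concentric tubes to concentric tubes, we get $F(E_\pm)=E_\pm$; setting $F_\pm := F|_{E_\pm}$ produces the required diffeomorphisms. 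The gluing formula $\varphi_1 = F_-\circ\varphi_0\circ F_+^{-1}$ is then obtained by tracking an arbitrary $x\in\partial E_+\subset E_{\varphi_0}$: since $x$ is identified with $\varphi_0(x)\in\partial E_-$, its image under $F$ forces $\varphi_1(F_+(x))=F_-(\varphi_0(x))$. The only mild subtlety is the WLOG step concerning the possible swap of singular leaves, which is essentially a bookkeeping issue tied to the labeling convention for $E_\pm$; beyond this, the argument is a direct verification, so I do not anticipate a substantive obstacle.
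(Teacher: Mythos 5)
Your overall strategy coincides with the paper's: the paper also proves this by observing that a foliated diffeomorphism $F$ corresponds to a pair $F_\pm$ on the pieces, via the diagram $F\circ p_0|_{E_\pm}=p_1\circ F_\pm$, and treats both directions as immediate from the definitions. Your sufficiency direction is fine. However, in the necessity direction there is one inference that does not follow as you state it: from ``$F$ sends concentric tubes to concentric tubes'' you conclude $F(E_\pm)=E_\pm$. A foliated diffeomorphism only sends leaves to leaves; it is under no obligation to send the radius-one leaf (the glued hypersurface $\partial E_+=\partial E_-$ inside $E_{\varphi_0}$) to the glued leaf of $E_{\varphi_1}$, because that leaf is not intrinsically distinguished among the regular leaves. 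For instance, a radial reparametrization of $E_{\varphi_1}$ (rescaling the tube radius in $E_+$ past $1$ and continuing into $E_-$ through a collar of the glued leaf) is a foliated self-diffeomorphism that moves the glued leaf; composing it with the identity gives a foliated diffeomorphism for which $F(E_+)$ is a strictly smaller or larger tube than $E_+$, so the restriction $F|_{E_+}$ is not an element of $\Diff(E_+)$ at all. The statement is still true, but you need an extra normalization step: first compose $F$ with such a radial foliated self-diffeomorphism of $(E_{\varphi_1},\fol_{\varphi_1})$ carrying the leaf $F(\partial E_+)$ back to the glued leaf (this uses the cone structure of the disk bundles together with a collar across $\partial E_\pm$, in the same spirit as the paper's Corollary on isotopy invariance); only after that does the restriction argument produce $F_\pm\in\Diff(E_\pm)$ preserving concentric tubes, and then chasing a point of $\partial E_+$ gives $\varphi_1=F_-\circ\varphi_0\circ F_+^{-1}$.

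A smaller caveat: the ``relabeling'' you invoke for the case where $F$ exchanges the two singular leaves is not purely bookkeeping, since then the restrictions are maps $E_+\to E_-$ and $E_-\to E_+$ and one obtains a relation involving $\varphi_0^{-1}$ rather than the stated one; the paper's diagram implicitly assumes the side-preserving situation, and if you allow swaps you should either say the conclusion is taken up to this symmetry or restrict attention to equivalences preserving the labels of $M_\pm$. Neither issue changes the architecture of your argument, which is the same direct verification as in the paper, but the $F(E_\pm)=E_\pm$ step as written is a genuine gap that needs the normalization above.
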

\begin{proof}
 The conclusion follows directly from the definitions and can be described by the commutative diagram
 \begin{equation*}
\xymatrix{
 E=E_+\sqcup E_-\ar[d]_{p_0}\ar[r]^{F_+\sqcup F_-}&\widetilde{E}=\widetilde{E}_+\sqcup\widetilde{E}_-\ar[d]^{p_1}\\
 (E_{\varphi_0},\fol_{\varphi_0})\ar[r]_-{F}^-{\cong}&(\widetilde{E}_{\varphi_1},\fol_{\varphi_1}),
}
 \end{equation*}
 where $p_0$ (resp. $p_1$) is the natural projection mapping $x\in\partial E_+$ (resp. $\partial \widetilde{E}_+$) and $\varphi_0(x)\in\partial E_-$ (resp. $\varphi_1(x)\in\partial \widetilde{E}_-$) to the gluing point in $E_{\varphi_0}$ (resp. $\widetilde{E}_{\varphi_1}$), $F$ is the foliated diffeomorphism satisfying $F\circ p_0|_{E_{\pm}}=p_1\circ F_{\pm}$.
\end{proof}
If $\varphi_i:\partial E_+\rightarrow\partial E_-$ $(i=0,1)$ are isotopic diffeomorphisms, by considering a collar $C$ of $\partial E_-$ in $E_-$ (resp. $\partial E_+$ in $E_+$) we can extend $\varphi_1\circ\varphi_0^{-1}$ (resp. $\varphi_1^{-1}\circ\varphi_0$) to a diffeomorphism $F_-\in\Diff(E_{-})$ (resp. $F_+\in\Diff(E_{+})$) preserving concentric tubes, and then $\varphi_1=F_-\circ \varphi_0$ (resp. $\varphi_1=\varphi_0\circ F_+^{-1}$), hence $(E_{\varphi_0},\fol_{\varphi_0})\cong (E_{\varphi_1},\fol_{\varphi_1})$. This shows
\begin{cor}\label{isotopy invariant}
The equivalence class of $(E_{\varphi},\fol_{\varphi})$ is independent of the choice of $\varphi$ in its isotopy class.
\end{cor}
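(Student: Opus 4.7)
The plan is to reduce the claim to Proposition \ref{equivalence criterion}. Given two isotopic gluing diffeomorphisms $\varphi_0,\varphi_1:\partial E_+\to\partial E_-$, it suffices to produce $F_\pm\in\Diff(E_\pm)$ that preserve the concentric tube structure and satisfy $\varphi_1=F_-\circ\varphi_0\circ F_+^{-1}|_{\partial E_+}$. I would make the asymmetric choice $F_+=\mathrm{id}$, which reduces the task to extending $h:=\varphi_1\circ\varphi_0^{-1}\in\Diff(\partial E_-)$ to a tube-preserving self-diffeomorphism of $E_-$. The hypothesis that $\varphi_1$ is isotopic to $\varphi_0$ gives a smooth family $\varphi_s$ with $\varphi_0,\varphi_1$ as endpoints, and hence furnishes an isotopy $h_s:=\varphi_s\circ\varphi_0^{-1}$ from $\mathrm{id}_{\partial E_-}$ to $h$ that I can feed into the extension.

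The extension is built from the collar geometry at $\partial E_-$. Fix a collar $\partial E_-\times[0,\epsilon)\hookrightarrow E_-$ whose second coordinate agrees with the distance to $\partial E_-$, so the concentric tubes intersect the collar in the slices $\partial E_-\times\{t\}$. Choosing a smooth cutoff $\rho:[0,\epsilon)\to[0,1]$ with $\rho(0)=1$ and $\rho\equiv 0$ near $\epsilon$, I would set
\[
F_-(x,t):=\bigl(h_{\rho(t)}(x),\,t\bigr)
\]
on the collar and $F_-=\mathrm{id}$ elsewhere. This assembles into a self-diffeomorphism of $E_-$ that fixes the radial coordinate, hence sends each concentric tube to itself, and restricts on $\partial E_-$ to $F_-|_{\partial E_-}=h$, giving $F_-\circ\varphi_0=\varphi_1$. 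Proposition \ref{equivalence criterion} then yields $(E_{\varphi_0},\fol_{\varphi_0})\cong(E_{\varphi_1},\fol_{\varphi_1})$. The only point needing care is the smooth matching of $F_-$ with the identity across $t=\epsilon$, which is arranged by the properties of $\rho$; a fully symmetric alternative is to extend $\varphi_1^{-1}\circ\varphi_0$ across a collar of $\partial E_+$ in $E_+$, yielding the same conclusion.
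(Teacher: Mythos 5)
Your argument is correct and is essentially the paper's own proof: the paper likewise sets one of $F_\pm$ to the identity and extends $\varphi_1\circ\varphi_0^{-1}$ (resp. $\varphi_1^{-1}\circ\varphi_0$) over a collar of $\partial E_-$ in $E_-$ (resp. of $\partial E_+$ in $E_+$) to a tube-preserving diffeomorphism, then invokes Proposition \ref{equivalence criterion}. Your explicit cutoff formula $F_-(x,t)=(h_{\rho(t)}(x),t)$ just makes the collar extension concrete.
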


Motivated by the criterion in Proposition \ref{equivalence criterion}, we introduce some subgroups of the diffeomorphism groups as follows.
Let $\pi:E_1\rightarrow B$ be the unit disk bundle of a Euclidean vector bundle $\xi$ over a complete connected manifold $B$. Let $\Diff_c(E_1)$ denote the subgroup $\Diff(E_1,\fol_c)$ of $\Diff(E_1)$ consisting of foliated diffeomorphisms from $(E_1,\fol_c)$ to itself, where $\fol_c$ is the canonical foliation consisting of concentric tubes $T_t$ of constant radii $t\in[0,1]$ around the zero section. Let $\Isom_b(E_1)$ be the subgroup of $\Diff_c(E_1)$ consisting of smooth (self-)bundle maps\footnote{A smooth bundle map between two vector bundles carries each vector space isomorphically onto a vector space, inducing a diffeomorphism of the base manifolds. When the induced diffeomorphism on the base is the identity map, the bundle map is called an isomorphism. The induced bundle by the bundle map is isomorphic to the domain bundle. Therefore, bundle maps can also be  regarded as an equivalence relation for vector bundles as isomorphisms do. See \cite{MS74}.} preserving the Euclidean metric. Denote by $\rho:\Diff(E_1)\rightarrow \Diff(\partial E_1)$ the homomorphism maps $F\in \Diff(E_1)$ to $\rho(F)=F|_{\partial E_1}$. Then $\Diff_c(\partial E_1):=\rho(\Diff_c(E_1))$, $\Isom_b(\partial E_1):=\rho(\Isom_b(E_1))$ are two subgroups in $\Diff_{E_1}(\partial E_1):=\rho(\Diff(E_1))$, the subgroup of $\Diff(\partial E_1)$ consisting of diffeomorphisms extendable to $E_1$.
\begin{prop}\label{linearization}
The inclusions $\Isom_b(E_1)\hookrightarrow\Diff_c(E_1)$, $\Isom_b(\partial E_1)\hookrightarrow\Diff_c(\partial E_1)$ are bijections on path components, i.e., $\pi_0(\Isom_b(E_1))\simeq \pi_0(\Diff_c(E_1))$, $\pi_0(\Isom_b(\partial E_1))\simeq\pi_0(\Diff_c(\partial E_1))$.
In particular, any diffeomorphism in $\Diff_c(\partial E_1)$ (resp. $\Diff_c(E_1)$) is isotopic to one in $\Isom_b(\partial E_1)$ (resp. $\Isom_b(E_1)$).
\end{prop}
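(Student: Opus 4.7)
The plan is to show that every $F \in \Diff_c(E_1)$ is canonically isotopic, inside $\Diff_c(E_1)$, to the vector bundle automorphism obtained by differentiating $F$ at the zero section, and that this automorphism is automatically a fiberwise isometry. Since $F$ preserves $\fol_c$ together with its distinguished leaves $T_0 = B$ and $T_1 = \partial E_1$, it maps $T_t$ to $T_{\sigma(t)}$ for some diffeomorphism $\sigma$ of $[0,1]$ fixing the endpoints. The first step is to arrange $\sigma = \mathrm{id}$: using the contractibility of the space of endpoint-fixing diffeomorphisms of $[0,1]$, I would choose a smooth isotopy from $\sigma^{-1}$ to the identity and lift it to an isotopy $g_u \in \Diff_c(E_1)$ of radial fiber-preserving diffeomorphisms of the form $g_u(b, v) = (b, h_u(|v|^2) v)$ in local trivializations; using $|v|^2$ rather than $|v|$ guarantees smoothness across the zero section. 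Then $g_1 \circ F$ preserves every tube individually, i.e.\ $|(g_1 \circ F)(v)| \equiv |v|$.

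Having reduced to the case $\sigma = \mathrm{id}$, I would define the radial rescaling $F_s(v) := s^{-1} F(sv)$ for $s \in (0,1]$, using scalar multiplication in the fibers of $\xi$. Norm preservation gives $|F_s(v)| = |v|$, so $F_s$ maps $E_1$ into itself and lies in $\Diff_c(E_1)$. In a local trivialization $(b,v)$, writing $F = (F_1, F_2)$ with $F_2(b, 0) \equiv 0$ (from $F(B) = B$), Taylor's theorem gives $F_2(b, sv) = s\, A(b) v + O(s^2)$, where $A(b) := \partial_v F_2(b, 0)$; hence $F_s$ extends smoothly through $s = 0$ with limit $F_0(b, v) = (F|_B(b), A(b)v)$, a vector bundle automorphism of $\xi$ covering $F|_B$. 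Passing norm preservation to the limit gives $|A(b)v| = |v|$, so each $A(b)$ is orthogonal and $F_0 \in \Isom_b(E_1)$. Concatenating with the Step 1 isotopy yields an isotopy in $\Diff_c(E_1)$ from $F$ to $F_0 \in \Isom_b(E_1)$, which proves surjectivity on $\pi_0$.

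For injectivity I would apply the same deformation parametrically. Any $G \in \Isom_b(E_1)$ has $\sigma_G = \mathrm{id}$ and is linear on fibers, so $G_s \equiv G$ and the entire construction fixes $G$; applied continuously (in the $C^\infty$-topology) to an isotopy $\gamma_t$ in $\Diff_c(E_1)$ joining $G^0, G^1 \in \Isom_b(E_1)$, it yields a $2$-parameter family whose $s = 0$ slice is a path in $\Isom_b(E_1)$ from $G^0$ to $G^1$. The $\partial E_1$ statement follows by restricting $g_u$ and $F_s$ to $\partial E_1$, using $\Diff_c(\partial E_1) = \rho(\Diff_c(E_1))$ and $\Isom_b(\partial E_1) = \rho(\Isom_b(E_1))$ by definition. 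The principal technical hurdle will be the smoothness of $F_s$ at $s = 0$ and of the radial reparameterization $g_u$ across $B$: the former rests on the vanishing $F_2(b, 0) \equiv 0$ combined with Taylor's theorem, while the latter is precisely why $g_u$ must be expressed through the smooth variable $|v|^2$ rather than $|v|$.
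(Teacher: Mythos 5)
Your overall strategy coincides with the paper's: rescale $F$ toward the zero section and take as limit its fiberwise derivative, which is automatically an isometric bundle automorphism (the paper does the normalization in one stroke, $F_t:=(F\circ h_t)/\lambda(t)$ with $\lambda(t)=|F(tV)|$, while you first force $F$ to be norm-preserving and then rescale by $s^{-1}$). The genuine weak point is your Step 1. You assert that the tube-radius function $\sigma$ is a diffeomorphism of $[0,1]$ and that $\sigma^{-1}$ can be realized by a smooth radial diffeomorphism $g_1(b,v)=(b,h_1(|v|^2)v)$. But a general endpoint-fixing diffeomorphism $\rho$ of $[0,1]$ admits no such realization: the radial map $v\mapsto \rho(|v|)\,v/|v|$ is smooth at the zero section only when $\rho(r)/r$ is a smooth function of $r^2$; e.g.\ $\rho(r)=r+r^2(1-r)$ produces the term $|v|v$, which is not $C^2$ at $v=0$. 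So writing $g_u$ ``through the smooth variable $|v|^2$'' does not by itself settle the matter --- you must prove that $\sigma^{-1}$ actually has this special form, and also that $\sigma$ is smooth at $0$ with $\sigma'(0)>0$ (otherwise $\sigma^{-1}$ need not be smooth at all); neither point is addressed, and the ``contractibility of endpoint-fixing diffeomorphisms of $[0,1]$'' is not the right tool, since the intermediate diffeomorphisms must also stay in the realizable class.

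Both points are true and the gap is fixable. Since $x\mapsto|F(x)|^2$ is smooth on $E_1$ and restricts to a radial function on each fiber, the even-function theorem of Whitney/Schwarz gives $\sigma(r)^2=G(r^2)$ with $G$ smooth, $G(0)=0$; moreover $\sigma'(0)=|((F_*)_{\pi(V)}V)^{\perp}|>0$ for unit vertical $V$ because $F_*$ preserves $TB$ and is injective, and $\sigma'(t)>0$ for $t>0$ for the same reason applied to the tubes. Hence $\sigma(r)=r\sqrt{G(r^2)/r^2}$ is of the form $r\,k(r^2)$ with $k$ smooth and positive, its odd extension to $[-1,1]$ is a diffeomorphism, and therefore $\sigma^{-1}$ has the same form; taking the canonical interpolation $h_u=(1-u)+u\,h_1$ (rather than an arbitrary isotopy of $[0,1]$) keeps the deformation continuous in $F$ and fixes $\Isom_b(E_1)$ pointwise, as your injectivity step requires. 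Alternatively you can delete Step 1 altogether, as the paper does, by normalizing with $\sigma$ itself: $F_s(v):=F(sv)/\sigma(s)$ maps $E_1$ into $E_1$, preserves the tube foliation, and converges as $s\to0^{+}$ to $(b,v)\mapsto(F|_B(b),A(b)v/\sigma'(0))$, an isometric bundle automorphism; this uses only $\sigma(s)>0$ for $s>0$ and $\sigma'(0)>0$, and avoids both the invertibility of $\sigma$ and its special smooth form. The remaining steps of your proposal (Hadamard/Taylor smoothness of $F_s$ at $s=0$, orthogonality of $A(b)$ in the limit, the parametric argument for injectivity, and restriction to $\partial E_1$) are sound and match the paper.
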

\begin{proof}
For $t\in(0,1]$, let $h_t: E_1\rightarrow E_1$ be the dilatation mapping $V\in E_1$ to $tV$, and for any $F\in\Diff_c(E_1)=\Diff(E_1,\fol_c)$, we define \[F_t:=(F\circ h_t)/\lambda(t)\in\Diff_c(E_1),\]
 where $\lambda:[0,1]\rightarrow[0,1]$ is the function defined by $F(T_t)=T_{\lambda(t)}$ for any concentric tube $T_t$ of radius $t\in[0,1]$. Explicitly, we have \[\lambda(t)=|F(tV)|, \quad for~any~V\in\partial E_1,\] where $|\cdot|$ denotes the norm of the Euclidean metric, and for $t=0$, $tV=\pi(V)$ means the base point in the zero section $B$. In fact,
 it is not hard to verify that \[\lambda(t)=\int_0^t|(\gamma_V'(t))^{\perp}|dt, \quad for~any~V\in\partial E_1,\] where $\gamma_V'(t)$ is the tangent vector of the curve $\gamma_V(t):=F(tV)$, and $(\cdot)^{\perp}$ means the projection from $T(E_1)$ to the vertical distribution $E_1$. Moreover, $\gamma_V'(t)=(F_*)_{tV}(V)$ is the image of $V$ under the tangential map $F_*:T(E_1)\rightarrow T(E_1)$ at $tV\in E_1$. Therefore, $\lambda$ is a smooth function with \[\lambda'(t)=|(\gamma_V'(t))^{\perp}|=|((F_*)_{tV}(V))^{\perp}|\geqq0,\quad \lambda'(0)>0,\quad for~any~V\in\partial E_1.\]
 Define $F_0:E_1\rightarrow E_1$ by \[F_0(V):=((F_*)_{\pi(V)}(V))^{\perp}/\lambda'(0),\quad for~V\in E_1. \] Then since $F_*$ is linear, $F_0$ is linear and preserves lengths, thus $F_0\in \Isom_b(E_1)$.

 Set $f_t:=\rho(F_t)=F_t|_{\partial E_1}\in \Diff_c(\partial E_1)$, and $f_0:=\lim_{t\rightarrow{0+}}f_t$. It follows that \[f_0(V)=\lim_{t\rightarrow{0+}}F(tV)/\lambda(t)=((F_*)_{\pi(V)}(V))^{\perp}/\lambda'(0)=F_0(V),\quad for~V\in\partial E_1.\]
 Hence $f_0=\rho(F_0)\in \Isom_b(\partial E_1)$.

In conclusion, we have shown that any $F\in\Diff_c(E_1)$ corresponds uniquely to a path $\{f_t|t\in[0,1]\}$ in $\Diff_c(\partial E_1)$ with one end $f_0\in\Isom_b(\partial E_1)$, which also shows that $\Isom_b(\partial E_1)\hookrightarrow\Diff_c(\partial E_1)$ is a bijection on path components. Conversely, given such a path $\{f_t|t\in[0,1]\}$ and a smooth nondecreasing function $\lambda:[0,1]\rightarrow[0,1]$ with $\lambda'(0)>0$, one can construct an $F\in\Diff_c(E_1)$ by: for $V\in\partial E_1$, $F(tV)=\lambda(t)f_t(V)$ for $t\in(0,1]$, and $F(b)=F_0(b)$ for $b=\pi(V)\in B$, where $F_0$ is the isomorphism that restricts to $f_0$. Therefore, a retraction of the path $\{f_t|t\in[0,1]\}$ to the constant path $\{\tilde{f}_t\equiv f_0\}$ induces a path connecting $F\in\Diff_c(E_1)$ and $F_0\in\Isom_b(E_1)$ in $\Diff_c(E_1)$. The proof is now complete.
\end{proof}

\begin{cor}\label{Diffe-Diffc}
Either the inclusion $\Isom_b(\partial E_1)\hookrightarrow \Diff_{E_1}(\partial E_1)$ is not surjective on path components, i.e., $\pi_0(\Isom_b(\partial E_1))\varsubsetneqq \pi_0(\Diff_{E_1}(\partial E_1))$, or $\Diff_{E_1}(\partial E_1)=\Diff_c(\partial E_1)$.
\end{cor}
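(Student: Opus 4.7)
The plan is to prove the contrapositive: assume the inclusion $\Isom_b(\partial E_1) \hookrightarrow \Diff_{E_1}(\partial E_1)$ is surjective on $\pi_0$, and show $\Diff_{E_1}(\partial E_1) \subseteq \Diff_c(\partial E_1)$, the reverse inclusion being immediate. By Proposition \ref{linearization}, the map $\pi_0(\Isom_b(\partial E_1)) \to \pi_0(\Diff_c(\partial E_1))$ is already a bijection, so surjectivity of the composition forces $\pi_0(\Diff_c(\partial E_1)) \to \pi_0(\Diff_{E_1}(\partial E_1))$ to be surjective. Consequently, every $f \in \Diff_{E_1}(\partial E_1)$ is connected in $\Diff(\partial E_1)$ by a smooth isotopy $\{f_t\}_{t \in [0,1]}$ to some $g \in \Diff_c(\partial E_1)$, and $g$ extends to some $G \in \Diff_c(E_1)$ with radial scaling function $\lambda_G$.

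The core step is to upgrade this boundary isotopy to a concentric-tube-preserving extension over the whole disk bundle. Put $\tilde{f}_t := f_t \circ g^{-1}$, an isotopy from the identity to $f g^{-1}$, and use the radial homothety $h_s$ to realize $\tilde{f}_t$ as a self-diffeomorphism of each tube $T_s$. I then modify $G$ only on the outer collar $s \in [1/2, 1]$ by post-composing fiberwise with $\tilde{f}_{\phi(s)}$, realized on $T_{\lambda_G(s)}$, where $\phi \colon [1/2,1] \to [0,1]$ is smooth, nondecreasing, vanishes flatly at $s = 1/2$, and equals $1$ at $s = 1$. Since each $\tilde{f}_{\phi(s)}$ sends $T_{\lambda_G(s)}$ to itself, the resulting map $F$ still carries $T_s$ into $T_{\lambda_G(s)}$, so $F \in \Diff_c(E_1)$; and by construction $F|_{\partial E_1} = \tilde{f}_1 \circ g = f$, placing $f$ in $\Diff_c(\partial E_1)$.

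The main subtlety is smoothness of the interpolation: the cutoff $\phi$ must be chosen flat at $s = 1/2$ so that $F$ meets $G$ to infinite order on the inner boundary of the collar, and one must verify that the collar modification is a diffeomorphism rather than a mere homeomorphism. Both points are standard once the construction is expressed as a smooth family of self-diffeomorphisms of $\partial E_1$ parametrised by $s$. No further hypotheses on the base $B$ or the rank of $\xi$ enter beyond those already in force in the definitions of $\Diff_c$ and $\Diff_{E_1}$.
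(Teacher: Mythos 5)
Your proposal is correct and follows essentially the same route as the paper: from surjectivity on $\pi_0$ and Proposition \ref{linearization} you conclude every $f\in\Diff_{E_1}(\partial E_1)$ is isotopic to some $g\in\Diff_c(\partial E_1)$, and then you use the isotopy in a collar of $\partial E_1$ to extend $f$ to a tube-preserving diffeomorphism of $E_1$, forcing $f\in\Diff_c(\partial E_1)$. The only difference is that you spell out the collar modification (cutoff $\phi$, fiberwise realization on $T_{\lambda_G(s)}$) which the paper leaves as a one-line standard construction.
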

\begin{proof}
If $\pi_0(\Isom_b(\partial E_1))\simeq\pi_0(\Diff_{E_1}(\partial E_1))$, then $\pi_0(\Diff_{E_1}(\partial E_1))\simeq \pi_0(\Diff_c(\partial E_1))$ by Proposition \ref{linearization}. In particular, any $f\in\Diff_{E_1}(\partial E_1)$ is isotopic to one in $\Diff_c(\partial E_1)$. Then by considering a collar of $\partial E_1$ in $E_1$, the isotopy induces an extension of $f$ to a diffeomorphism $F\in\Diff_c(E_1)$, proving that $f=F|_{\partial E_1}\in\Diff_c(\partial E_1)$.
\end{proof}

Notice that Proposition \ref{linearization} also holds for maps between two disk bundles. Thus by Proposition \ref{equivalence criterion} we have
\begin{cor}\label{non-isomorphic bundles}
For $\varphi_0:\partial E_{+}\rightarrow\partial E_{-}$, $\varphi_1:\partial \widetilde{E}_{+}\rightarrow\partial \widetilde{E}_{-}$,  $(E_{\varphi_0},\fol_{\varphi_0})$ is foliated diffeomorphic to $(\widetilde{E}_{\varphi_1},\fol_{\varphi_1})$ if and only if there are smooth bundle maps $F_{\pm}:E_{\pm}\rightarrow \widetilde{E}_{\pm}$ (preserving the Euclidean metrics) such that $\varphi_1$ is isotopic to $F_{-}\circ\varphi_0\circ F_{+}^{-1}|_{\partial \widetilde{E}_+}$.
\end{cor}
\begin{proof}
By Propositions \ref{equivalence criterion} and \ref{linearization}, we can get the conclusion directly provided with $F_{\pm}$ preserving the Euclidean metrics. Nevertheless, the latter condition is superfluous since, otherwise, one could composite $F_{\pm}$ with some (self-)isomorphisms of $E_{\pm}$ so as to make them preserving the Euclidean metrics, which results in equivalent foliations (see the discussion before this subsection).
\end{proof}
As an immediate application, this will lead to many unexpected examples of non-equivalent foliations on $\sphere^n$. We postpone the discussion in the next subsection.

\subsection{Towards a classification of foliations on a fixed manifold.}\label{subsect-classification}
In order to classify codimension 1 singular Riemannian (isoparametric) foliations up to equivalence on a closed simply connected manifold $N$ (e.g., $\sphere^n$), the preceding discussions, in particular Corollary \ref{non-isomorphic bundles}, suggest the following three steps:
\begin{itemize}
\item[(1)] Classify the disk bundle pairs $E_{\pm}$ (up to bundle maps or isomorphisms) such that $N\cong E_+\cup_{\varphi}E_-=:E_{\varphi}$ for some gluing diffeomorphism $\varphi:\partial E_+\rightarrow\partial E_-$.
\item[(2)] Classify the isotopy classes of the gluing diffeomorphisms $\varphi$ such that $N\cong E_{\varphi}$.
\item[(3)] Take the quotient of the set $G_N$ of isotopy classes in step (2) by the action $\beta:\pi_0(\Isom_b(E_{\pm}))\times G_N\rightarrow G_N$, $([F_{\pm}], [\varphi])\mapsto [F_{-}\circ\varphi\circ F_{+}^{-1}|_{\partial E_+}]$. Then for a given pair $E_{\pm}$ in step (1), we have exactly $|G_N/\beta|$ non-equivalent foliations.\footnote{Here we have not assumed orientations on the manifolds and foliations, hence foliated diffeomorphisms need not to be orientation-preserving. If necessary, one can also incorporate orientation into the classification problem (see Corollary \ref{E+=Dn}).}
\end{itemize}

At first glance, the step (2) seems redundant as one would imagine that only one isotopy class of $\varphi$ could produce $N$. In fact, non-isotopic gluing diffeomorphisms may also give rise to diffeomorphic manifolds. For example, if there exist non-isotopic but pseudo-isotopic diffeomorphisms $\varphi_i:\partial E_+\rightarrow\partial E_-$, then they generate diffeomorphic manifolds. In the following we discuss the steps (2) and (3). Namely, we are now concerned with how many (or how to distinguish) foliations up to equivalence would be derived from a (fixed) disk bundle pair $E_{\pm}$.

Firstly, if two gluing diffeomophisms give non-diffeomorphic manifolds $E_{\varphi_0}, E_{\varphi_1}$, then clearly the foliations $(E_{\varphi_0},\fol_{\varphi_0}), (E_{\varphi_1},\fol_{\varphi_1})$ will be non-equivalent. But once the two glued manifolds are diffeomorphic (as in the steps (2) and (3)), one cannot distinguish the foliations in general though his intuition tells so. In fact, for the easiest case that $E_{\pm}=D^n$, we will prove Theorem \ref{cor-twopoints isop} below which shows the complexity of this question. Precisely, we are now concerned with the following
\begin{prob}\label{problem}
For $\varphi_i:\partial E_{+}\rightarrow\partial E_{-}$ $(i=0,1)$ satisfying $E_{\varphi_0}\cong E_{\varphi_1}$, when is $(E_{\varphi_0},\fol_{\varphi_0})$ foliated diffeomorphic to $(E_{\varphi_1},\fol_{\varphi_1})$ (or how to distinguish them)?
\end{prob}
 For instance, it holds when $E_{\varphi_i}$ are closed simply connected $4$-manifolds as shown in the classification by the author and Radeschi \cite{GR13}.

Let $\varphi_0:\partial E_+\rightarrow\partial E_-$ be a gluing diffeomorphism and $(E_{\varphi_0}, \fol_{\varphi_0})$ be the foliation as before.
For any $h_{\pm}\in \Diff_{E_{\pm}}(\partial E_{\pm})$, it is easily seen that the glued manifold $E_{h_-\circ\varphi_0\circ h_+^{-1}}$ is diffeomorphic to $E_{\varphi_0}$. The foliations $(E_{h_-\circ\varphi_0\circ h_+^{-1}},\fol_{h_-\circ\varphi_0\circ h_+^{-1}})$ are then candidates for foliations non-equivalent to $(E_{\varphi_0},\fol_{\varphi_0})$ on diffeomorphic manifolds, giving negative examples to Problem \ref{problem}. In the following we show that the groups $\pi_0(\Isom_b(\partial E_{\pm}))$, $\pi_0(\Diff_{E_{\pm}}(\partial E_{\pm}))$ play key role in voting for them.
\begin{thm}\label{negative ex}
 With notations as before, we have

$(1)$ If there were $h_{\pm}\in \Diff_{E_{\pm}}(\partial E_{\pm})$ such that $(E_{h_-\circ\varphi_0\circ h_+^{-1}},\fol_{h_-\circ\varphi_0\circ h_+^{-1}})\ncong(E_{\varphi_0}, \fol_{\varphi_0})$, then either $\pi_0(\Isom_b(\partial E_+))\varsubsetneqq \pi_0(\Diff_{E_{+}}(\partial E_+))$ or $\pi_0(\Isom_b(\partial E_-))\varsubsetneqq \pi_0(\Diff_{E_{-}}(\partial E_-))$.

$(2)$ Consider $E_+=E_-$. If $\pi_0(\Isom_b(\partial E_+))\varsubsetneqq \pi_0(\Diff_{E_{+}}(\partial E_+))$, then for any $[\varphi_0]\in\pi_0(\Isom_b(\partial E_+))$ and $[\varphi_1]\in\pi_0(\Diff_{E_{+}}(\partial E_+))\setminus\pi_0(\Isom_b(\partial E_+))$, we have $E_{\varphi_0}\cong E_{\varphi_1}$ but $(E_{\varphi_0},\fol_{\varphi_0})\ncong(E_{\varphi_1},\fol_{\varphi_1})$.
\end{thm}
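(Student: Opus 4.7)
For part (1), I would argue the contrapositive: assume $\pi_0(\Isom_b(\partial E_\pm)) \simeq \pi_0(\Diff_{E_\pm}(\partial E_\pm))$ on both sides. By Corollary \ref{Diffe-Diffc} applied to each of $E_{\pm}$, this forces $\Diff_{E_\pm}(\partial E_\pm) = \Diff_c(\partial E_\pm)$. Hence the given $h_\pm$ actually lie in $\Diff_c(\partial E_\pm) = \rho(\Diff_c(E_\pm))$, so each $h_\pm$ extends to some $H_\pm \in \Diff_c(E_\pm)$ preserving concentric tubes. Taking $F_\pm := H_\pm$ in Proposition \ref{equivalence criterion} then yields $(E_{h_-\circ\varphi_0\circ h_+^{-1}},\fol_{h_-\circ\varphi_0\circ h_+^{-1}}) \cong (E_{\varphi_0},\fol_{\varphi_0})$, contradicting the hypothesis of (1).

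For part (2), the diffeomorphism $E_{\varphi_0} \cong E_{\varphi_1}$ is immediate once one notes that $\varphi_1^{-1}\circ\varphi_0 \in \Diff_{E_+}(\partial E_+)$ extends to some $\Phi \in \Diff(E_+)$; applying $\Phi$ on the first copy of $E_+$ and the identity on the second descends through the gluing to the required diffeomorphism (both manifolds are just the double of $E_+$). For the non-equivalence of the foliations, I would argue by contradiction: if $(E_{\varphi_0},\fol_{\varphi_0}) \cong (E_{\varphi_1},\fol_{\varphi_1})$, Proposition \ref{equivalence criterion} supplies $F_\pm \in \Diff_c(E_+)$ with $\varphi_1 = F_-|_{\partial E_+}\circ\varphi_0\circ F_+^{-1}|_{\partial E_+}$. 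Setting $f_\pm := F_\pm|_{\partial E_+} \in \Diff_c(\partial E_+)$, Proposition \ref{linearization} provides isotopies inside $\Diff_c(\partial E_+)$ from $f_\pm$ to some $g_\pm \in \Isom_b(\partial E_+)$. Since $\Isom_b(\partial E_+)$ is a subgroup and $\varphi_0 \in \Isom_b(\partial E_+)$, the composite $g_-\circ\varphi_0\circ g_+^{-1}$ also lies in $\Isom_b(\partial E_+)$, and $\varphi_1$ is isotopic to it through $\Diff_{E_+}(\partial E_+)$. Thus the path component of $\varphi_1$ in $\Diff_{E_+}(\partial E_+)$ meets $\Isom_b(\partial E_+)$, so $[\varphi_1] \in \pi_0(\Isom_b(\partial E_+))$ (viewed inside $\pi_0(\Diff_{E_+}(\partial E_+))$), contradicting the choice of $\varphi_1$.

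The crux is the combined use of Corollary \ref{Diffe-Diffc} (for part (1)) and Proposition \ref{linearization} (for part (2)): the former promotes extendable boundary diffeomorphisms to tube-preserving ones whenever the $\pi_0$-comparison is an equality, while the latter linearizes any tube-preserving diffeomorphism up to isotopy, thereby pushing the conjugated gluing map back into $\Isom_b(\partial E_+)$. The main obstacle I anticipate is bookkeeping — carefully tracking which of $\Isom_b \subset \Diff_c \subset \Diff_{E_+} \subset \Diff$ each diffeomorphism occupies at each stage of the reduction — rather than any substantially new geometric input beyond what has already been established.
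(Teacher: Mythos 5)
Your proposal is correct and follows essentially the same route as the paper: part (1) via Corollary \ref{Diffe-Diffc} plus Proposition \ref{equivalence criterion}, and part (2) by extracting $f_{\pm}\in\Diff_c(\partial E_+)$ from Proposition \ref{equivalence criterion} and linearizing them via Proposition \ref{linearization} to land back in $\Isom_b(\partial E_+)$, contradicting the choice of $[\varphi_1]$. The only cosmetic differences are that you argue with $\varphi_1$ directly instead of $h:=\varphi_1\circ\varphi_0^{-1}$ and spell out the extension producing $E_{\varphi_0}\cong E_{\varphi_1}$, which the paper leaves as an earlier remark.
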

\begin{proof}
(1) We prove this by contradiction. If both $\pi_0(\Isom_b(\partial E_{\pm}))\simeq\pi_0(\Diff_{E_{\pm}}(\partial E_{\pm}))$, then $\Diff_{E_{\pm}}(\partial E_{\pm})=\Diff_c(\partial E_{\pm})$ by Corollary \ref{Diffe-Diffc}, whence $(E_{h_-\circ\varphi_0\circ h_+^{-1}},\fol_{h_-\circ\varphi_0\circ h_+^{-1}})\cong(E_{\varphi_0}, \fol_{\varphi_0})$ for any $h_{\pm}\in \Diff_{E_{\pm}}(\partial E_{\pm})$ by Proposition \ref{equivalence criterion}.

(2) Under the assumptions, we see that $h:=\varphi_1\circ\varphi_0^{-1}$ lies in $\pi_0(\Diff_{E_{+}}(\partial E_+))\setminus\pi_0(\Isom_b(\partial E_+))$, which shows immediately $E_{\varphi_0}\cong E_{h\circ\varphi_0}=E_{\varphi_1}$. Now we prove the nonequivalence of the foliations by contradiction.

If $(E_{\varphi_0},\fol_{\varphi_0})\cong(E_{\varphi_1},\fol_{\varphi_1})$, then by Proposition \ref{equivalence criterion} there exist $f_{\pm}\in\Diff_c(\partial E_+)$ such that $\varphi_1=f_-\circ\varphi_0\circ f_+^{-1}$. Hence $h=f_-\circ\varphi_0\circ f_+^{-1}\circ\varphi_0^{-1}$. By Proposition \ref{linearization}, $[f_{\pm}]\in\pi_0(\Isom_b(\partial E_+))$ and hence we have $[h]=[f_-\circ\varphi_0\circ f_+^{-1}\circ\varphi_0^{-1}]\in\pi_0(\Isom_b(\partial E_+))$, the contradiction.
\end{proof}
\begin{rem}\label{rem-diffSn}
Consider $E_+=D^n$. $\Isom_b(\partial E_+)=\Isom(\sphere^{n-1})=O(n)$ and $\Diff_{E_+}(\partial E_+)=\rho(\Diff(D^n))=\Diff_{D^n}(\sphere^{n-1})\subseteq \Diff(\sphere^{n-1})$. For $n\geq 6$, it is well-known from the pseudo-isotopy theorem of Cerf (cf. \cite{Ce70}) that $\pi_0(\Diff^+(D^n))=0$, and thus $\pi_0(\Diff_{D^n}(\sphere^{n-1}))\simeq \pi_0(\Isom(\sphere^{n-1}))\simeq \mathbb{Z}_2$. For $n\leq5$, $\Diff_{D^n}(\sphere^{n-1})=\Diff(\sphere^{n-1})$ because of the exact sequence $\pi_0(\Diff^+(D^n))\xlongrightarrow{\rho}\pi_0(\Diff^+(\sphere^{n-1}))\rightarrow \Gamma_n$, where the group of twisted $n$-spheres $\Gamma_n=0$ for $n\leq6$. Therefore, for $n\leq4$, we have also $\pi_0(\Diff_{D^n}(\sphere^{n-1}))\simeq \pi_0(\Isom(\sphere^{n-1}))\simeq \mathbb{Z}_2$, since $\Diff(\sphere^{n-1})\approx O(n)$ are homotopy equivalent proven by Smale \cite{Sm59} for $n=3$ and by Hatcher \cite{Ha83} for $n=4$ (well-known for $n\leq2$). For the case of $n=5$,  it is an open problem whether or not $\Diff(\sphere^4)$ has more than two components.
\end{rem}
\begin{rem}\label{rem-pseudoisotop}
 It follows from the remark above that for $n\neq5$, the group $\Gamma(\sphere^{n-1})$ of concordance (pseudo-isotopy) classes of orientation-preserving diffeomorphisms of $\sphere^{n-1}$ is equal to $\pi_0(\Diff^+(\sphere^{n-1}))$, i.e., pseudo-isotopy implies isotopy for diffeomorphisms on $\sphere^{n-1}$; and for $n=5$, $\Gamma(\sphere^{4})\simeq0\subseteq \pi_0(\Diff^+(\sphere^{4}))$. In particular, it is not known whether there exist non-isotopic but pseudo-isotopic diffeomorphisms on $\sphere^{4}$.
\end{rem}

  If $E_+=D^n$, by the disk theorem of Palais \cite{Pa60} (two orientation-preserving embeddings of the disk are isotopic), every pair $E_{\varphi_0}\cong E_{\varphi_1}$ has the form in (1) of Theorem \ref{negative ex}, i.e., $\varphi_1=h_-\circ\varphi_0\circ h_+^{-1}$ for some $h_{\pm}\in \Diff_{E_{\pm}}(\partial E_{\pm})$. This partially addresses the step (2) in the classification problem, namely, the set $G_N$ ($N\cong E_{\varphi_0}$) of isotopy classes of diffeomorphisms $\varphi:\partial E_+=\sphere^{n-1}\rightarrow \partial E_-$ satisfying $N\cong E_{\varphi}$ is equal to $$\Big\{[\varphi] \mid \varphi=h_-\circ\varphi_0\circ h_+^{-1} \quad \textit{for any}~~ h_{\pm}\in \Diff_{E_{\pm}}(\partial E_{\pm}) \Big\}.$$
  Moreover, it is easy to show that each orientation-preserving diffeomorphism $h_{+}\in \Diff_{E_{+}}^+(\partial E_{+})=\Diff_{D^n}^+(\sphere^{n-1})$ is pseudo-isotopic to the identity and vice versa. If in addition $E_-=D^n$ and orientations assumed (now $N$ is an oriented homotopy sphere), then the set $G_N$ consists of those isotopy classes which are pseudo-isotopic to $\varphi_0$. Combining these discussions with Theorem \ref{negative ex} and Remarks \ref{rem-diffSn}, \ref{rem-pseudoisotop}, we are able to derive the following construction and rigidity results (\ref{E+=Dn}, \ref{cor-twopoints isop}), addressing partially the step (3) in the classification problem.
   \begin{cor}\label{E+=Dn}
   Suppose $E_+=D^n$.

   $(1)$ For $n\neq5$, for any $\varphi_i:\partial E_{+}\rightarrow\partial E_{-}$ $(i=0,1)$ satisfying $E_{\varphi_0}\cong E_{\varphi_1}$,  $(E_{\varphi_0},\fol_{\varphi_0})\ncong(E_{\varphi_1},\fol_{\varphi_1})$ holds only if $\pi_0(\Isom_b(\partial E_-))\varsubsetneqq \pi_0(\Diff_{E_{-}}(\partial E_-))$.

   $(2)$ If $\pi_0(\Isom_b(\partial E_-))\varsubsetneqq \pi_0(\Diff_{E_{-}}(\partial E_-))$, then $\pi_0(\Isom_b^+(\partial E_-))\varsubsetneqq \pi_0(\Diff_{E_{-}}^+(\partial E_-))$, and for any $\varphi_i:\partial E_{+}\rightarrow\partial E_{-}$ $(i=0,1)$ with $[\varphi_1\circ\varphi_0^{-1}]\in\pi_0(\Diff_{E_{-}}^+(\partial E_-))\setminus\pi_0(\Isom_b^+(\partial E_-))$, the oriented foliations $(E_{\varphi_i},\fol_{\varphi_i})$ $(i=0,1)$ are not oriented foliated diffeomorphic to each other though $E_{\varphi_0}\cong E_{\varphi_1}$ orientation-preserving.
   \end{cor}
   \begin{proof}
   The first assertion follows directly from (1) of Theorem \ref{negative ex} and $\pi_0(\Diff_{D^n}(\sphere^{n-1}))\simeq \pi_0(\Isom(\sphere^{n-1}))$ $(n\neq5)$ stated in Remark \ref{rem-diffSn}.

   For the assertion (2), firstly, it is clear $\pi_0(\Isom_b^+(\partial E_-))\varsubsetneqq \pi_0(\Diff_{E_{-}}^+(\partial E_-))$, and we prove the latter claim by contradiction. If $(E_{\varphi_0},\fol_{\varphi_0})\cong(E_{\varphi_1},\fol_{\varphi_1})$ orientation-preserving, then by Proposition \ref{equivalence criterion} there exist $f_{\pm}\in\Diff_c^+(\partial E_{\pm})$ such that $\varphi_1=f_-\circ\varphi_0\circ f_+^{-1}$. Hence $\varphi_1\circ\varphi_0^{-1}=f_-\circ\varphi_0\circ f_+^{-1}\circ\varphi_0^{-1}$. By Proposition \ref{linearization}, $[f_{\pm}]\in\pi_0(\Isom_b^+(\partial E_{\pm}))$, in particular $[f_+]=[id]\in\pi_0(\Isom_b^+(\partial E_+))\simeq\pi_0(SO(n))\simeq0$ and hence $[\varphi_0\circ f_+^{-1}\circ\varphi_0^{-1}]=[id]$. Thus we have $[\varphi_1\circ\varphi_0^{-1}]=[f_-]\in\pi_0(\Isom_b^+(\partial E_-))$, the contradiction.
   \end{proof}
\begin{rem}\label{Eells-Kuiper}
Let $p\in E_{\varphi}$ be the center of the disk $E_+=D^n$, $g_{\varphi}$ be a bundle-like metric for the singular Riemannian foliation $(E_{\varphi}, \fol_{\varphi})$. Then $(E_{\varphi}, g_{\varphi})$ is a Blaschke manifold at $p$. Conversely, a Blaschke manifold at $p$ is of the form $E_{\varphi}$ (cf. \cite{Be78}). Moreover, for $n=8$, the Eells-Kuiper quaternionic projective planes are of the form $E_{\varphi}$ for some certain $E_-$ (some $D^4$-bundles over $\sphere^4$ with boundary diffeomorphic to $\sphere^7$). On each of these manifolds, say $E_{\varphi_0}=E_+\cup_{\varphi_0}E_-$, Tang and Zhang \cite{TZ14} showed there exists a $\varphi_1$ such that $(E_{\varphi_1}, g_{\varphi_1})$ is an $SC^p$ manifold. Notice that the diffeomorphism types of the Eells-Kuiper quaternionic projective planes do not depend on the choice of the diffeomorphisms $\varphi:\partial E_+=\sphere^7\rightarrow \partial E_-$ (cf. \cite{KS07}). Therefore, $E_{\varphi}\cong E_{\varphi_0}\cong E_{\varphi_1}$ for any diffeomorphism $\varphi:\partial E_+=\sphere^7\rightarrow \partial E_-$. It is interesting to study how many non-equivalent foliations $(E_{\varphi}, \fol_{\varphi})$ on each of the Eells-Kuiper quaternionic projective planes and which of them induce (non-isometric) $SC^p$ Riemannian structures as $(E_{\varphi_1}, g_{\varphi_1})$.
\end{rem}

We do not know whether the construction in (2) of Corollary \ref{E+=Dn} yields non-equivalent foliations if we ignore orientation. Because in that case, we do not know, in general, whether the isotopy class $[\varphi_0\circ f_+^{-1}\circ\varphi_0^{-1}]$ for an orientation-reversing $[f_+]\in \pi_0(O(n))$ would lie in $\pi_0(\Isom_b(\partial E_-))$. However, when $E_{\pm}=D^n$, the question at hand can be resolved by Theorem \ref{negative ex}. Applying in addition Remark \ref{rem-diffSn} and (1) of Corollary \ref{E+=Dn}, we obtain
\begin{thm}\label{cor-twopoints isop}
\begin{itemize}
\item[(1)] Every homotopy sphere $\Sigma^n$ ($n\neq4,5$) and $\sphere^4$ admit exactly one equivalence class of isoparametric foliations with two points as the focal submanifolds.
\item[(2)] $\pi_0(\Diff(\sphere^{4}))\neq\mathbb{Z}_2$ if and only if $\sphere^5$ admits non-equivalent isoparametric foliations with two points as the focal submanifolds.
\end{itemize}
\end{thm}
\begin{proof}
 To prove (1), the only left stuff is to recall the well-known fact that every homotopy $n$-sphere except for exotic $4$-spheres (if exist) is a twisted $n$-sphere, the homotopy $n$-sphere obtained by gluing two $n$-disks through a diffeomorphism $\varphi\in \Diff^+(\sphere^{n-1})$. We recall that exotic $4$-spheres (if exist) admit no singular Riemannian foliations, of codimension 1 by \cite{GT12}, or of general codimension by \cite{GR13}.

 The necessity for (2) follows directly from (2) of Theorem \ref{negative ex}. On the other hand, (1) of Theorem \ref{negative ex} applies to prove the sufficiency, since the condition there is automatically satisfied by using the disk theorem of Palais as discussed right before Corollary \ref{E+=Dn}.
\end{proof}

The left-hand condition in (2) of Theorem \ref{cor-twopoints isop}, $\pi_0(\Diff(\sphere^{4}))\neq\mathbb{Z}_2$, means that $\Diff(\sphere^4)$ would have more than two components, and there would exist non-isotopic but pseudo-isotopic diffeomorphisms on $\sphere^{4}$.

In the rest part of this section, we present new examples of foliations on $\sphere^n$ that are non-equivalent to the classical isoparametric foliations on the unit sphere $\sphere^n(1)$, which illustrate the complexity for the step (1) in the classification problem as well as for the other two steps discussed above.

Recalling the criterion for distinguishing two foliations in Corollary \ref{non-isomorphic bundles}, we know if there exist two non-equivalent (up to bundle maps) disk bundles $E_+$ and $\widetilde{E}_+$ with diffeomorphic total spaces (thus let the total spaces be identified, say $N_+$), then for any other disk bundle $E_-$ with a diffeomorphism between boundaries $\varphi:\partial E_+=\partial \widetilde{E}_+=\partial N_+\rightarrow \partial E_-$, the canonical foliations of $E_+$ and $\widetilde{E}_+$ give rise to two non-equivalent foliations $\fol_{\varphi}$ and $\widetilde{\fol}_{\varphi}$ on $N:=N_+\cup_{\varphi}E_-$. In particular, this leads to the following new examples of foliations on $\sphere^n$.
\begin{exmp}\label{examples}
$(1)$  $\mathbb{S}^{n}$ $(n=11,12,13,15,16,17)$ admit isoparametric foliations (under some metrics) non-equivalent to the classical isoparametric foliations (under the round metric), with the same isoparametric hypersurfaces (diffeomorphic to $\sphere^{k}\times \sphere^{l}$) and the same focal submanifolds (diffeormophic to $(\sphere^{k}, \sphere^{l})$), for $(k,l)=(7,3)$, $(8,3)$, $(9,3)$, $(11,3)$, $(11,4)$, $(11,5)$, respectively.

$(2)$ $\sphere^{14}$ admits $15$ (ignore orientation) non-equivalent isoparametric foliations whose isoparametric hypersurfaces are diffeomorphic to $\sphere^7\times\sphere^6$ and focal submanifolds are $(\Sigma^7,\sphere^6)$, where $\Sigma^7\in\Theta_7\cong\mathbb{Z}_{28}$ is any homotopy $7$-sphere.
\end{exmp}
\begin{proof}
$(1)$ Haefliger and Levine's examples (cf. \cite{DW00}) ensure that there are non-trivial disk bundle structures on the total space of the trivial disk bundle $\sphere^{k}\times D^{l+1}$ for these $(k,l)$ listed. Letting $D^{k+1}\times \sphere^l$ be the other disk bundle $E_-$ and $\varphi$ be the identity map of their common boundary $\sphere^k\times\sphere^l$ yields the conclusion by the discussion above.

$(2)$ Notice that the tangent (disk) bundle of each homotopy sphere is diffeomorphic to that of the standard sphere (cf. \cite{DW00}), and clearly they are non-equivalent bundles if the homotopy spheres are non-diffeomorphic to each other. Recall that there are exactly $15$ non-diffeomorphic homotopy $7$-spheres. The tangent bundle $T\sphere^7$ is trivial and thus the total space of its disk bundle is $\sphere^7\times D^7$. Letting $D^8\times\sphere^6$ be the other disk bundle $E_-$ and $\varphi$ be the identity map of their common boundary $\sphere^7\times\sphere^6$ yields the conclusion by the discussion above.
\end{proof}

At last, we remind the reader that, as generalizations of classical isoparametric hypersurfaces in unit spheres, closed Dupin hypersurfaces also divide the sphere into two disk bundles (cf. \cite{Th83}, \cite{GH87}). It is still unknown whether these disk bundles are equivalent to those induced by classical isoparametric hypersurfaces.

 \section{Isoparametric foliations on homotopy spheres}\label{sect-isop-sphere}
 In order to prove Theorem \ref{main thm}, it suffices to consider homotopy $n$-spheres for $n\geq7$, since there are no exotic $n$-spheres for $n\leq6$ and $n\neq4$. In this case, the group $\Theta_n$ of h-cobordism classes of oriented homotopy $n$-spheres (always isomorphic to the group $\Gamma_n$ of oriented twisted $n$-spheres) is isomorphic to the mapping class group $\pi_0\Diff^+(\mathbb{S}^{n-1})$ by (cf. \cite{Ce70})
 \begin{eqnarray}\label{isom-spheres}
&\pi_0\Diff^+(\mathbb{S}^{n-1})&\longrightarrow \Gamma_n\simeq\Theta_n\\
&[\phi] &\longmapsto  \Sigma_{\phi}:=D^n\cup_{\phi}D^n.\nonumber
 \end{eqnarray}
Note that $\Sigma_{\phi}$ depends only on the isotopy class of $\phi\in\Diff^+(\mathbb{S}^{n-1})$.

\textbf{Proof of Theorem \ref{main thm}.} Given an isoparametric foliation $\fol$ on a homotopy $n$-sphere $\Sigma$, we have the decomposition (\ref{gluing str}) with $N=\Sigma=\Sigma_{+}\cup\Sigma_{-}\cong E_{\varphi}=E_{+}\cup_{\varphi}E_{-}$. In the following, for the sake of simplicity, we fix the orientations and (foliated) diffeomorphisms are assumed orientation-preserving.

It is well-known that any orientation-preserving diffeomorphism is isotopic to one that restricts to the identity on an embedded disk. Thus, for any $\phi\in\Diff^+(\mathbb{S}^{n-1})$, we can assume $\phi:\mathbb{S}^{n-1}=D_{+}^{n-1}\cup_{id}D_{-}^{n-1}\rightarrow D_{+}^{n-1}\cup_{id}D_{-}^{n-1}$ satisfy $\phi|_{D_{+}^{n-1}}=id$. Consider a disk $D_-^{n-1}$ in $M:=\partial E_{-}$ and write $M$ as $M=M'\cup D_{-}^{n-1}$. Define a diffeomorphism $d_{\phi}$ on $M$ by setting $d_{\phi}|_{D_{-}^{n-1}}=\phi|_{D_{-}^{n-1}}$ and identity on $M'$. Then gluing $E_{-}$ with $E_{+}$ by $d_{\phi}\circ\varphi$, we get a manifold
\[E_{d_{\phi}\circ\varphi}:=E_{+}\cup_{d_{\phi}\circ\varphi}E_{-}.\]
It follows easily that $E_{d_{\phi}\circ\varphi}$ depends only on the isotopy class of $\phi\in\Diff^+(\mathbb{S}^{n-1})$ and hence we have defined a map $\Phi_{\varphi}:\pi_0\Diff^+(\mathbb{S}^{n-1})\simeq\Theta_n\rightarrow\Phi_{\varphi}(\Theta_n)$ by mapping $[\phi]\simeq\Sigma_{\phi}$ to $E_{d_{\phi}\circ\varphi}$. Since the induced map $(d_{\phi})_{*}$ on homotopy groups $\pi_{*}(M)$ and homology groups $H_{*}(M)$ is trivial, it follows from van Kampen theorem and Mayer-Vietoris sequence that $E_{d_{\phi}\circ\varphi}$ is also a homotopy $n$-sphere. We claim that $E_{d_{\phi}\circ\varphi}$ is essentially diffeomorphic to the homotopy $n$-sphere $\Sigma_{\phi}\#\Sigma$. Consequently, the image $\Phi_{\varphi}(\Theta_n)=\Theta_n+\Sigma=\Theta_n$, which means that each homotopy $n$-sphere can be decomposed into the same disk bundles $E_{\pm}$. Moreover, it follows from the discussion in the last section that, the 1-1 correspondence $\{(\Sigma,\fol)\}\leftrightarrow \{(\widetilde{\Sigma},\widetilde{\fol})\}$ between the sets of equivalence classes of isoparametric foliations on any two homotopy $n$-spheres $\Sigma$ and $\widetilde{\Sigma}=\Sigma_{\phi}\#\Sigma$ can be represented by
\begin{equation*}
(E_{\varphi},\fol_{\varphi})\mapsto (E_{d_{\phi}\circ\varphi}, \fol_{d_{\phi}\circ\varphi}),
\end{equation*}
and the inverse is in the same form if we wrote $\Sigma=\Sigma_{\phi^{-1}}\#\widetilde{\Sigma}$. It is still left to show that the 1-1 correspondence is well-defined, i.e., if $(E_{\varphi_0},\fol_{\varphi_0})\cong (E_{\varphi_1},\fol_{\varphi_1})$ then $(E_{d_{\phi}\circ\varphi_0},\fol_{d_{\phi}\circ\varphi_0})\cong (E_{d_{\phi}\circ\varphi_1},\fol_{d_{\phi}\circ\varphi_1})$. By Proposition \ref{equivalence criterion} and Corollary \ref{isotopy invariant}, there are diffeomorphisms $h_{\pm}\in\Diff_c^+(\partial E_{\pm})$ such that $\varphi_1=h_{-}\circ\varphi_0\circ h_{+}^{-1}$ and without loss of generality, we can assume $h_-|_{D_-^{n-1}}=id$. Then $d_{\phi}\circ h_-=h_-\circ d_{\phi}$ on $M$ and hence $d_{\phi}\circ\varphi_1=h_{-}\circ d_{\phi}\circ\varphi_0\circ h_{+}^{-1}$, proving $(E_{d_{\phi}\circ\varphi_0},\fol_{d_{\phi}\circ\varphi_0})\cong (E_{d_{\phi}\circ\varphi_1},\fol_{d_{\phi}\circ\varphi_1})$.

Now we come back to consider the claim that $E_{d_{\phi}\circ\varphi}\cong \Sigma_{\phi}\#E_{\varphi}$. It requires essentially no more than an alternative explanation of the connected sum $N\#\Sigma_{\phi}$ by removing a disk in $N$ and gluing the disk back through $\phi$. In fact, we will prove it in a more general setting by (\ref{Nphi=N+phi}) in Section \ref{sect-inertia}, where replacing $N$ by $E_{\varphi}$ proves the claim here. For the sake of inspiration, we present here an h-cobordism proof for the case when $E_{\varphi}=\mathbb{S}^n$. That is,
we construct an explicit h-cobordism between $E_{d_{\phi}\circ\varphi}$ and $\Sigma_{\phi}\cong\Sigma_{\phi}\#\mathbb{S}^n$ when $E_{\varphi}=\mathbb{S}^n$.

Take a disk $D^n$ in $E_{-}$ without intersecting the boundary $\partial E_-$, and write $E_{-}$ as $E_{-}=E_{-}'\cup D^n$. Then $\partial E_{-}'=M\sqcup \mathbb{S}^{n-1}$. Take disks $D_{-}^{n-1}$ in $M$ and $\mathbb{S}^{n-1}$ respectively and connect them by an embedding of $D_{-}^{n-1}\times[0,1]$ in $E_{-}'$. Extend $\phi|_{D_{-}^{n-1}}$ trivially to $D_{-}^{n-1}\times[0,1]$ and by the identity elsewhere so as to define a diffeomorphism $\Psi_{\phi}$ on $E_{-}'$, thereby $\Psi_{\phi}|_M=d_{\phi}$ and $\Psi_{\phi}|_{\mathbb{S}^{n-1}}=\phi$.
Let $D(E_{-})=E_{-}\cup_{id} E_{-}$ be the double of $E_{-}$. Then it is an $\mathbb{S}^{m_{-}}$ bundle over $M_{-}$ bounding a disk bundle, say $W_{-}$, over $M_{-}$ of rank $m_{-}+1$ (see \cite{TXY12} for an interesting study of the topology and geometry on this double). Rewrite the boundaries $\partial W_{-}$ and $\partial D^{n+1}$ as
\begin{eqnarray*}
&\partial W_{-}=D(E_{-})=E_{-}\bigcup_ME_{-}'\bigcup_{\mathbb{S}^{n-1}} D^n,\\
&\partial D^{n+1}=\mathbb{S}^n=E_{\varphi}=E_{+}\bigcup_{\varphi}E_{-}'\bigcup_{\mathbb{S}^{n-1}} D^n.
\end{eqnarray*}
Gluing $W_{-}$ with $D^{n+1}$ along the common part $E_{-}'$ of their boundaries by the diffeomorphism $\Psi_{\phi}$ (and smoothing the corners), we get a manifold $W^{n+1}=D^{n+1}\cup_{\Psi_{\phi}}W_{-}$ with boundary $\partial W= E_{d_{\phi}\circ\varphi}\sqcup\Sigma_{\phi}$.

By van Kampen theorem $W$ is simply connected and thus $H_1(W)=0$. Since both $E_{-}$ and $W_{-}$ contract to $M_{-}$, we have $H_k(E_{-})\simeq H_k(W_{-})\simeq H_k(M_{-})$ for any $k$ and they vanish when $k\geq n-1$ since $dim(M_{-})<n-1$. By the Mayer-Vietoris sequence, we have $H_k(E_{-}')\simeq H_k(E_{-})\simeq H_k(W_{-})$ for $k=1,\cdots,n-2$, $H_{n-1}(E_{-}')\simeq\mathbb{Z}$ and $H_n(E_{-}')=0$. By the Mayer-Vietoris sequence
\[\cdots\rightarrow H_k(E_{-}')\rightarrow H_k(W_{-})\rightarrow H_k(W)\rightarrow H_{k-1}(E_{-}')\rightarrow H_{k-1}(W_{-})\rightarrow\cdots,\]
we obtain
\begin{equation*}
H_*(W)=\left\{\begin{array}{ll}
\mathbb{Z} & if~~*=0,n,\\
0 & otherwise.
\end{array}\right.
\end{equation*}
Applying the exact sequence of relative homology gives $H_*(W,\Sigma_{\phi})=0$. Therefore, by the h-cobordism theorem (cf. \cite{Mi65}), $E_{d_{\phi}\circ\varphi}$ is diffeomorphic to $\Sigma_{\phi}$.

The proof is now complete.\hfill $\Box$

 \section{Inertia groups and exotic smooth structures}\label{sect-inertia}
As in the last section, we always choose a representative $\phi$ of $[\phi] \in\pi_0(\Diff^+(\sphere^{n-1}))$ (identified with $\Sigma_{\phi}\in\Theta_n$ by (\ref{isom-spheres})) such that $\phi|_{D_+^{n-1}}=id$. Diffeomorphisms and embeddings with codimension zero are orientation-preserving.

Let $M^n$ be a closed oriented manifold. Recall (cf. \cite{Le70}) that there are two subgroups $I_0(M)\subset\Theta_n$, $I_1(M)\subset\Theta_{n+1}$ called the \emph{inertia groups} of $M$. $I_0(M)$ consists of all $\Sigma_{\phi}\in\Theta_n$ such that $M\#\Sigma_{\phi}\cong M$. $I_1(M)$ consists of all $\Sigma_{\phi}\in\Theta_{n+1}$ such that the diffeomorphism $d_{\phi}$ of $M$ which differs from the identity only on an $n$-disk in $M$ (identified with $D_-^n\subset\sphere^n$), and there coincides with $\phi$, is concordant to the identity. It is not hard to see that for $\Sigma_{\phi}\in\Theta_n\setminus I_0(M)$, $M\#\Sigma_{\phi}\ncong M$ is homeomorphic to $M$ (by a radial extension of $\phi$ to $D^n$) and hence gives an exotic oriented smooth structure on $M$. Moreover, different cosets in $\Theta_n/I_0(M)$ give distinct oriented smooth structures on $M$. Therefore, there exist at least $|\Theta_n|/|I_0(M)|$ distinct oriented smooth structures on $M$. $I_0(M)$ is then of great importance in the study of exotic smooth structures on $M$. On the other hand, $I_1(M)$ contributes to the study of the group $\Gamma(M)$ of concordance classes of diffeomorphisms of $M$, i.e., the coset space $\Theta_{n+1}/I_1(M)$ corresponds to a subset of $|\Theta_{n+1}|/|I_1(M)|$ elements in $\Gamma(M)$ (and hence in $\pi_0(\Diff^+(M))$).

Levine \cite{Le70} showed these two inertia groups have a very close relation:
\begin{equation}\label{I1=I0}
I_1(M)=I_0(M\times\sphere^1).
\end{equation}
In the following we relate further $I_1(M)$ with $I_0(N)$ when $M$ is a hypersurface in $N$, which leads to a proof of Theorem \ref{exoticstr-thm} and would certainly induce more applications.
\begin{thm}\label{thm-I1<I0}
Let $M^{n-1}$ be a closed oriented hypersurface embedded in a closed oriented manifold $N^n$. Then
\begin{equation}\label{I1<I0}
I_1(M^{n-1})\subseteq I_0(N^n).
\end{equation}
Therefore, $I_0(M^{n-1}\times\sphere^1)\subseteq I_0(N^n)$. In particular, there exist at least $|\Theta_n|/|I_0(N)|$ distinct oriented smooth structures on $M^{n-1}\times\sphere^1$, and there exist at least $|\Theta_n|/|I_0(N)|$ elements in $\Gamma(M)$ and $\pi_0(\Diff^+(M))$.
\end{thm}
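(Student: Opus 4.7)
The plan is to establish the core inclusion $I_1(M^{n-1}) \subseteq I_0(N^n)$ by an elementary cut-and-reglue along the hypersurface, and then to deduce the remaining statements formally. For $\Sigma_\phi \in I_1(M)$ represented by $\phi \in \Diff^+(\sphere^{n-1})$ with $\phi|_{D_+^{n-1}} = \mathrm{id}$, the self-diffeomorphism $d_\phi \in \Diff^+(M)$ differs from $\mathrm{id}$ only on a disk $D_-^{n-1} \subset M$. Fixing a bicollar $M \times [-1,1] \hookrightarrow N$ (available since $M$ is two-sided), I would write $N = N_+ \cup_{\mathrm{id}} N_-$ along $M = M \times \{0\}$ and form $N_{d_\phi} := N_+ \cup_{d_\phi} N_-$; the aim is to show $N \# \Sigma_\phi \cong N$.

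The heart of the argument is the identity~(\ref{Nphi=N+phi}),
\[
N_{d_\phi} \cong N \# \Sigma_\phi.
\]
To prove it I would localize to the embedded $n$-disk $D^n := D_-^{n-1} \times [-1,1] \subset N$, since outside this neighborhood $d_\phi$ is the identity and the regluing has no effect. After corner smoothing, the boundary sphere decomposes as $\sphere^{n-1} = D_-^{n-1} \cup D_+^{n-1}$, where $D_-^{n-1} = D_-^{n-1} \times \{0\}$ is the equatorial slice cut by $M$ and $D_+^{n-1} := (D_-^{n-1} \times \{-1\}) \cup (\partial D_-^{n-1} \times [-1,1]) \cup (D_-^{n-1} \times \{1\})$ is its complement. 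Because $\phi|_{D_+^{n-1}} = \mathrm{id}$ while the twist is concentrated in $\phi|_{D_-^{n-1}}$, the cut-and-reglue inside $D^n$ amounts exactly to removing the open $n$-disk and regluing via $\phi \in \Diff^+(\sphere^{n-1})$, which by definition of the twisted sphere is the connect sum with $\Sigma_\phi = D^n \cup_\phi D^n$.

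With the identity in hand, I would use the concordance $H \in \Diff^+(M \times [0,1])$ from $\mathrm{id}$ to $d_\phi$ supplied by $\Sigma_\phi \in I_1(M)$. Rescaling $H$ into the bicollar and viewing it as a self-diffeomorphism of the collar that interpolates between the two boundary gluings, $H$ provides an explicit diffeomorphism $N_{d_\phi} \cong N$: the cutting locus can be slid from $M \times \{0\}$ to $M \times \{1\}$ across $H$, converting the $d_\phi$-gluing into the $\mathrm{id}$-gluing. Combined with the identity above, this gives $N \# \Sigma_\phi \cong N$, hence $\Sigma_\phi \in I_0(N)$ and (\ref{I1<I0}) holds. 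The inclusion $I_0(M \times \sphere^1) \subseteq I_0(N)$ then follows at once from Levine's identity (\ref{I1=I0}); the counting statements are formal, since $|I_0(M \times \sphere^1)| \leq |I_0(N)|$ provides at least $|\Theta_n|/|I_0(N)|$ distinct cosets of $\Theta_n/I_0(M \times \sphere^1)$, each producing a distinct oriented smooth structure on $M^{n-1} \times \sphere^1$ via connect sum, and the analogous lower bound for $\Gamma(M)$ and $\pi_0(\Diff^+(M))$ uses $I_1(M) \subseteq I_0(N)$ together with the standard embedding $\Theta_n/I_1(M) \hookrightarrow \Gamma(M)$.

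The main obstacle will be the careful verification of (\ref{Nphi=N+phi}): one must manage the corner smoothing in the identification of the $n$-disk neighborhood and check that the decomposition $\sphere^{n-1} = D_+^{n-1} \cup D_-^{n-1}$ used to construct $d_\phi$ is compatible with the one defining $\Sigma_\phi$, so that the resulting oriented connect sum is $+\Sigma_\phi$ rather than $-\Sigma_\phi$. The normalization $\phi|_{D_+^{n-1}} = \mathrm{id}$ is precisely what enforces this alignment, and the h-cobordism argument sketched at the end of the proof of Theorem~\ref{main thm} for the special case $E_\varphi = \sphere^n$ should serve as a template, with the bicollar replacing the role of the sphere's $D^{n+1}$-filling.
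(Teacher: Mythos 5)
Your overall strategy is the same as the paper's: cut $N$ along $M$, reglue by $d_\phi$, establish the key identity (\ref{Nphi=N+phi}) $N_{d_\phi}\cong N\#\Sigma_\phi$, use the concordance from $\Sigma_\phi\in I_1(M)$ inserted in a collar to get $N_{d_\phi}\cong N$, and then finish with Levine's identity (\ref{I1=I0}) and the standard counting. The concordance step and the formal deductions at the end are fine.

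However, your justification of the central identity (\ref{Nphi=N+phi}) contains a genuine error. Having chosen the straddling disk $D^n:=D_-^{n-1}\times[-1,1]$, you claim a decomposition $\sphere^{n-1}=D_-^{n-1}\cup D_+^{n-1}$ of its boundary in which ``$D_-^{n-1}=D_-^{n-1}\times\{0\}$'' is one hemisphere. But $D_-^{n-1}\times\{0\}$ is the equatorial cross-section lying in the \emph{interior} of $D^n$ (it meets $\partial D^n$ only in $\partial D_-^{n-1}\times\{0\}$), and the set you call $D_+^{n-1}$ is in fact \emph{all} of $\partial D^n$; so this is not a hemisphere decomposition of the boundary sphere. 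This matters because in the bicollar picture the regluing by $d_\phi$ takes place precisely along that interior equatorial slice, not along $\partial D^n$, so the assertion that ``the cut-and-reglue inside $D^n$ amounts exactly to removing the open $n$-disk and regluing via $\phi$'' is exactly the point that still needs an argument and is not supplied by the (incorrect) decomposition. The gap is fixable in two ways: either reparametrize the twisted disk $(D_-^{n-1}\times[-1,0])\cup_{\phi}(D_-^{n-1}\times[0,1])$ by the map that is the identity on the lower half and $(x,t)\mapsto(\phi(x),t)$ on the upper half (legitimate since $\phi=\mathrm{id}$ near $\partial D_-^{n-1}$), which pushes the interior twist out to the top face of $\partial D^n$ and exhibits $N_{d_\phi}$ as $N$ with a disk removed and reglued by a boundary diffeomorphism conjugate to $\phi$; or follow the paper, which avoids the issue altogether by taking the disk $\widetilde D^n=D_-^{n-1}\times[0,1]$ in a one-sided collar of $M$ inside the cut-open manifold $\widetilde N$, so that the face $D_-^{n-1}\times\{0\}$ genuinely lies in $M$, the regluing map $\tilde d_\phi$ acts on that hemisphere of $\partial\widetilde D^n$ (and is the identity on the rest), and the identification with $N\#\Sigma_\phi$ follows from the disk theorem. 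Also note that the h-cobordism argument you cite as a template is only the paper's illustrative special case $E_\varphi=\sphere^n$; the actual proof of Theorem \ref{thm-I1<I0} is the direct cut-and-reglue argument just described.
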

\begin{proof}
Let $\widetilde{N}$ be the complementary of $M$ in $N$ with boundary $M\sqcup -M$. Given any $f\in\Diff^+(M)$, one gets a closed oriented manifold $N_f$ by gluing $\widetilde{N}$ along $M$ through $f$; thus $N_{id}=N$. For $\Sigma_{\phi}\in\Theta_n$, we denote by $N_{\phi}$ the manifold $N_{d_{\phi}}$, where $d_{\phi}\in\Diff^+(M)$ is the diffeomorphism that equals $\phi$ in a disk $D_-^{n-1}\subset M$ and $id$ outside it, as in the definition of $I_1(M)$. We claim that
\begin{equation}\label{Nphi=N+phi}
N_{\phi}\cong N\#\Sigma_{\phi}.
\end{equation}

Consider a collar $C:=M\times [0,\infty)$ of $M$ in $\widetilde{N}$ and the embedded disk $\widetilde{D}^n:= D_-^{n-1}\times [0,1]\cong D^n$ in $C$, where the diffeomorphism $\tilde{d}:D^n\rightarrow\widetilde{D}^n$ can be chosen so that it restricts to the identity on the common part $D_-^{n-1}$ of the boundaries $\partial \widetilde{D}^n$ and $\partial D^n=\mathbb{S}^{n-1}=D^{n-1}_+\cup D_-^{n-1}$. Otherwise, by the disk theorem, there is a diffeomorphism $f\in \Diff^+(\sphere^{n-1})$ isotopic to the identity such that $f|_{D_-^{n-1}}=\tilde{d}^{-1}|_{D_-^{n-1}}$. Let $F\in \Diff^+(D^n)$ be an extension of $f$. Then we can choose the diffeomorphism $\tilde{d}\circ F:D^n\rightarrow\widetilde{D}^n$ which restricts to the identity on $D_-^{n-1}$. Now since the diffeomorphism $\tilde{d}_{\phi}:=\tilde{d}\circ\phi\circ\tilde{d}^{-1}|_{\partial \widetilde{D}^n}\in\Diff^+(\partial\widetilde{D}^n)$ restricts to $\phi$ on $D_-^{n-1}$ and $id$ elsewhere, we can regard the manifold $N_{\phi}$ as
\[N_{\phi}=(N\setminus \widetilde{D}^n)\cup_{\tilde{d}_{\phi}}\widetilde{D}^n.\]
 On the other hand, by the disk theorem we can write the connected sum as
 \[N\#\Sigma_{\phi}=(N\setminus\widetilde{D}^n)\cup_{d_+\circ\tilde{d}^{-1}|_{\partial\widetilde{D}^n}}(\Sigma_{\phi}\setminus\overline{D}_+^n),\]
 where $d_{\pm}:D^n\rightarrow\overline{D}_{\pm}^n$ are disks embedded in $\Sigma_{\phi}=\overline{D}_+^n\cup\overline{D}_-^n$ with $d_-^{-1}\circ d_+|_{\partial D^n}=\phi$. Then the equation (\ref{Nphi=N+phi}) follows from
 \[(N\setminus\widetilde{D}^n)\cup_{d_+\circ\tilde{d}^{-1}|_{\partial\widetilde{D}^n}}\overline{D}_{-}^n\cong (N\setminus\widetilde{D}^n)\cup_{d_-^{-1}\circ d_+\circ\tilde{d}^{-1}|_{\partial\widetilde{D}^n}}D^n\cong (N\setminus \widetilde{D}^n)\cup_{\tilde{d}_{\phi}}\widetilde{D}^n.\]

 Now for any $\Sigma_{\phi}\in I_1(M)$, $N_{\phi}=N_{d_{\phi}}\cong N$ since $d_{\phi}$ is concordant to $id$. Then by (\ref{Nphi=N+phi}) we obtain $\Sigma_{\phi}\in I_0(N)$, proving (\ref{I1<I0}).

 The proof is now complete.
\end{proof}

\textbf{Proof of Theorem \ref{exoticstr-thm}.} Let $M$ be a closed hypersurface embedded in $\sphere^n$. Then $M$ is orientable and we fix its orientation (cf. \cite{MS74}). For any $k\geq0$ and any product $P^k:=\sphere^{k_1}\times\cdots\times\sphere^{k_l}$ of standard spheres of total dimension $k=\sum_{i=1}^l k_i$ ($k_i\geq1$) ($P^k$ is a point when $k=0$), we can embed $M^{n-1}\times P^k$ in $\sphere^n\times P^k$. Then by Theorem \ref{thm-I1<I0}, we have $I_0(M^{n-1}\times P^k\times\sphere^1)\subseteq I_0(\sphere^n\times P^k)$. By the theorem of Schultz \cite{Sc71} we obtain $I_0(M^{n-1}\times P^k\times\sphere^1)=I_0(\sphere^n\times P^k)=0$, completing the proof. \hfill $\Box$
%%%%%%%%%%%%%%%%%%%%%%%%%%%

 At last we remark that combining the relations (\ref{I1=I0}), (\ref{I1<I0}) with (\ref{Nphi=N+phi}) would deduce more applications than what we have shown, once provided with some known $I_0(N)$.

%%%%%%%%%%%%%%%%%%%%%%%
\begin{ack}
The author is very grateful to Professors Alexander Lytchak, Zizhou Tang, Gudlaugur Thorbergsson for their supports and valuable discussions.
\end{ack}
%%%%%%%%%%%%%%%%%%%%%%%

%%%%%%%%%%%%%%%%%%%%%%%%%%%%%%%%%%%%%%%%%%%%

\end{document}